\theoremstyle{plain}
\newtheorem{theorem}{Theorem}[section]
\newtheorem{lemma}[theorem]{Lemma}
\newtheorem{corollary}[theorem]{Corollary}
\theoremstyle{definition}
\newtheorem{remark}[theorem]{Remark}
\def\<#1>{\langle\, #1\,\rangle}
\newcommand{\dsp}{\displaystyle}
\newcommand{\mb}{\mathbf}
\newcommand{\R}{\mathbb{R}}
\newcommand{\C}{\mathbb{C}}
\newcommand{\N}{\mathbb{N}}
\newcommand{\G}{\mbox{\tiny $G$}}
\newcommand{\luc}{\mathit{LUC}(G)}
\font\seis=cmr6
\def\CB{\mathscr{CB}}
\def\CK{\mathscr{CK}}
\def\luc{{\seis{\mathscr{LUC}}}}
\def\wap{{\seis{\mathscr{WAP}}}}
\newcommand{\I}{\mathbb{I}}
\begin{document}
\title[Extreme Non-Arens Regularity of the Group Algebra ]%
      {Extreme Non-Arens Regularity of the Group Algebra }

\author[Filali and Galindo]{M. Filali \and  J. Galindo}
\thanks{ Research of  the second named  author  supported by the Spanish Ministry of Science (including FEDER funds), grant
MTM2011-23118 and  Universitat Jaume I, grant
P1$\cdot$1B2011-30.}
\keywords{group algebra, extremely non-Arens regular,
weakly almost periodic, isometry, Haar homeomorphism, metrizable groups}

\address{\noindent Mahmoud Filali,
Department of Mathematical Sciences\\University of Oulu\\Oulu,
Finland. \hfill\break \noindent E-mail: {\tt mfilali@cc.oulu.fi}}
\address{\noindent Jorge Galindo, Instituto Universitario de Matem\'aticas y
Aplicaciones (IMAC)\\ Universidad Jaume I, E-12071, Cas\-tell\'on,
Spain. \hfill\break \noindent E-mail: {\tt jgalindo@mat.uji.es}}

\subjclass[2010]{Primary 22D15; Secondary 43A46, 43A15, 43A60, 54H11}

\date{\today}

\begin{abstract}
Following Granirer, a Banach algebra $A$ is extremely non-Arens regular when the quotient space
${\dsp\frac{A^*}{\wap(A)}}$ contains a closed linear subspace which has $A^*$ as a continuous linear image.
We prove that the group algebra $L^1(G)$ of any infinite locally compact group is always  extremely non-Arens regular. When $G$ is not discrete, this result is deduced from the much stronger property that,
in fact, there is  a linear isometric copy of $L^\infty(G)$ in the quotient space ${\dsp \frac{L^\infty(G)}{\CB(G)}}$.
 \end{abstract}
\maketitle

\section{Introduction}
The second dual space $A^{**}$ of a Banach algebra $A$ can be made into a Banach algebra with two different products, each extending the original product of $A$. These products were introduced by Arens in 1951  and are called the first (or left) Arens product and the second (or right) Arens product,  see \cite{A1} and \cite{A2}.
We may describe the Arens products explicitly as follows (although we shall have no need of that):
If $(\mu_\alpha)$ and $(\nu_\beta)$ are nets in $A$ with $\lim_\alpha\mu_\alpha=\mu$ and $\lim_\beta\nu_\beta=\nu$,  then
\[\mu\circ\nu =\lim_\alpha\lim_\beta\mu_\alpha\nu_\beta\quad\text{and}\quad\mu\square\nu=\lim_\beta\lim_\alpha\mu_\alpha\nu_\beta,\]
where the limits are taken in the weak*-topology in $A^{**}$ and the order of the limits is crucial.
Note that $\mu\mapsto\mu\circ\nu$ is continuous in $\mu$ for each fixed $\nu\in A^{**}$
and is continuous in $\nu$ for each fixed $\mu\in A$.
In general, it is not continuous in $\nu$ when $\mu$ is not $A$.
The topological centre of $A^{**}$ is defined by
\begin{align*}
Z(A^{**})&=\{\mu\in A^{**}\;:\;\nu\mapsto \mu\circ\nu\quad\text{is continuous on}\;A^{**}\}\\&
=\{\mu\in A^{**}\;:\;\mu\circ\nu=\mu\square\nu\quad\text{for all}\;\nu\in A^{**}\}.\end{align*}
As already noted, $A$ is a subalgebra of $Z(A^{**}).$

The same observations and definitions may be given for the second product with the roles of the
variables reversed.

The algebra $A$ is said to be {\it Arens regular} if these two products coincide, which is the same as $Z(A^{**})=A^{**}$.
When $A$ is commutative, it is easy to check that $A$ is Arens regular if and only
if $A^{**}$ is commutative with respect to both products.

An interesting criterion for the regularity of $A$ was given by Pym \cite{P} in 1965 when he considered   the space $\wap(A)$ of weakly almost periodic functionals on $A.$ This is the space of all
functionals $a'\in A^*$ such that the set \[\{a'.a: a\in A \quad\text{and}\quad \|a\|\le1\}\] is relatively compact in the weak topology of $A^*$, where the action of $A$ on $A^*$ is given by
\[<a'.a,b>=<a',ab>,\quad a,\;b\in A,\; a'\in A^*.\] Pym went on showing that $A$ is Arens regular if and only if $\wap(A)=A^*$,  see also \cite{Yo}. For further details, see \cite{FiSi}.

It is known  that all $C^*$-algebras are Arens regular. This fact was first proved (implicitly)   by Sherman \cite{S}   and  Takeda \cite{T} when they proved that the second conjugate of a $C^\ast$-algebra is  a von Neumann algebra.  Some years later,  Civin and Yood   \cite[Theorem 7.1]{CY} reproduced  Takeda's  proof and  brought up explicitly the Arens regularity of $C^\ast$-algebras.
A different proof of this same fact can also be found in \cite[Theorem 38.19]{BD}.
For more details, see \cite{DH}, \cite{P}, \cite{D}, \cite{FiSi} and \cite{DL}.

The group algebra   $L^1(G)$ of an infinite locally compact group, however, is never Arens regular.
Arens himself showed that the semigroup algebra $\ell^1$ with convolution
is non-Arens regular. To prove this fact he produced two distinct invariant means $\mu$ and $\nu$ in ${\ell^1}^{**}.$ It is then trivial to see that $\nu\mu=\mu\ne\nu=\mu\nu$.
In \cite{Day}, Day used the same argument to show that $L^1(G)$ is non-Arens regular for many infinite discrete groups, including all Abelian ones.
This was followed by the seminal paper of Civin and Yood \cite{CY} where it was proved  that $L^1(G)$ is non-Arens
regular for any infinite locally compact Abelian group. Their method relied
again on Day's result when the group is discrete. When $G$ is not discrete, the Hahn Banach theorem provides a non-zero right annihilator $\mu$ of $L^1(G)^{**},$ i.e., $L^1(G)^{**}\mu=\{0\},$
and so $\mu \nu=\mu\ne 0=\nu\mu$ for every right identity $\nu$ in $L^1(G)^{**}.$

The general case was finally settled by Young in 1973.
Young relied on the criterion proved in \cite{P} and \cite{Yo} and produced a function in $L^\infty(G)$ which is not in $\wap(G).$

It is worthwhile to note that Young's approach     to non-Arens regularity of $L^1(G)$ was essentially different  from all previous ones. Depending on the approach, two extreme types of non-Arens regularity arise:  strong Arens irregularity and  extreme non-Arens regularity.

The Banach algebra $A$ is said to be {\it strongly Arens irregular}, according
to Dales and Lau \cite{DL}, when $Z(A^{**})=A$.
The group algebra is strongly Arens irregular for any infinite locally compact group.
This was proved first by Isik, Pym and \"Ulger \cite{IUP} when $G$ is compact, then by Grosser and Losert in \cite{GL} when $G$ is Abelian,
and finally in the general case by Lau and Losert in \cite{LL}.
A number of articles offering different approaches  and various properties related to the topological centres appeared subsequently, we cite for example \cite{DL}, \cite{N}, \cite{FS}
and most recently \cite{BIP} and \cite{FS2}.

The second type of irregularity, the one we focus on in this paper, was first studied  by Young in 1973 in \cite{Y} and actually stems from Pym's theorem on the equivalence between $\wap(A)=A^\ast$ and Arens regularity of $A$. This is the notion of extreme non-Arens regularity introduced by Granirer in \cite{G} that countered
   Pym's result some thirty years later.
Granirer said that the Banach algebra $A$ is {\it extremely non-Arens regular} if the quotient space ${\dsp \frac{A^*}{\wap(A)}}$ contains a closed linear subspace which has $A^*$ as a continuous linear image, and proved that the Fourier algebras
$A(\mathbb R)$ and $A(\mathbb T)$ are extremely non-Arens regular. Thus, the group algebras $L^1(\mathbb R)$ and $\ell^1(\mathbb Z)$ are extremely non-Arens regular.
In \cite{H},  Hu generalized Granirer's results and proved that the Fourier algebra $A(G)$ is extremely non-Arens regular whenever $w(G)\ge\kappa(G)$, where $\kappa(G)$ is the minimal number of compact sets required to cover $G$ and $w(G)$ is the minimal cardinality of an open base at the identity $e$ of $G$.

In \cite[Theorems 4.1 and 4.7]{FN}, Fong and Neufang considered infinite, locally compact, metrizable groups with $\kappa(G)\ge w(G)$ and proved that the group algebra $L^1(G)$ is extremely non-Arens regular when $G$ is
$\sigma-$compact and metrizable or it contains an open $\sigma-$compact, metrizable, subgroup $H$ which is either normal or has $|H|<|G|.$

In \cite{FV}, Filali and Vedenjuoksu proved, using slowly oscillating functions, that the semigroup algebra $\ell^1(S)$ is extremely non-Arens regular for any infinite weakly cancellative discrete semigroup. In particular, this property is held by the group algebra $L^1(G)$
for any infinite discrete group $G.$

In \cite{BF}, Bouziad and Filali, using a method  different from that of  \cite{FV}, proved that the quotient space ${\dsp \frac{\luc(G)}{\wap(G)}}$ contains a copy of $\ell^\infty(\kappa(G))$ for every locally compact group $G$. Thus,  the full dual of Hu's result was realized by showing that the group algebra $L^1(G)$ is extremely non-Arens regular whenever  $\kappa(G)\ge w(G)$.
The same conclusion was obtained again in \cite{FG1} for non-discrete  $G$,  using the size of the quotient space ${\dsp\frac{\CB(G)}{\luc(G)}}.$ Here, $\CB(G)$ is the space of all bounded, continuous, scalar-valued functions on $G$,
$\luc(G)$ is the space of bounded scalar-valued functions on $G$ which are uniformly continuous with respect to the right uniformity of $G$, and $\wap(G)$ is the space of bounded, continuous, scalar-valued functions on $G$ which are
weakly almost periodic, all with the supremum norm.

In this paper, we prove the  full theorem.
\medskip

\noindent
{\bf Theorem A.} {\it The group algebra $L^1(G)$ is extremely non-Arens regular for any infinite locally compact group.
}
\medskip

As already noted, this result was proved in  \cite{BF}     and \cite{FV} when $G$ is discrete. So we are really
concerned with the case when $G$ is not discrete. In this case, in fact, we shall prove the following stronger theorem.
\medskip

{\bf Theorem B.} {\it
There exists a linear isometric copy of $L^\infty(G)$ in the quotient space ${\dsp \frac{L^\infty(G)}{\CB(G)}} $ for any infinite, non-discrete, locally compact group $G.$}
\medskip

As for the Fourier algebra $A(G),$ the situation is even more complex.
When $G$ is an amenable locally compact group, the Fourier algebra $A(G)$ is Arens regular if and only if $G$ is finite. This was proved in 1989 by Lau and Wong, see \cite{LW}.
Further results on Arens regularity of $A(G)$ were published two years later by Forrest in \cite{F}.
The question of whether $A(G)$ is not Arens regular
seems to be still not completely settled for discrete non-amenable groups.
As already mentioned, Hu proved in \cite{H} that $A(G)$ is extremely non-Arens regular whenever $w(G)\ge \kappa(G)$. Our Theorem A enables us to omit this condition
when $G$ is Abelian.
\medskip

It may worthwhile to note that
extreme non-Arens regularity in the sense of Granirer does not imply strong Arens irregularity in the sense of \cite{DL} since $A(SO(3))$ is extremely non-Arens regular by \cite{H}, but it is not strongly Arens irregular as recently proved by Losert \cite{L}. Neither does strong Arens irregularity imply extreme non-Arens regularity see, \cite{HN2}.

\subsection{The spaces}
We recall that
 $L^1(G)$ is  the Banach space made of
all    equivalence classes of scalar-valued functions which are integrable with respect to the Haar measure $\lambda_{\G}$, where, as usual, two functions are  equivalent if and only if they differ only on a set of Haar measure zero.

   By $L^\infty (G)$ we understand the Banach dual of $L^1(G)$.
Our incarnation  of $L^\infty(G)$,  valid for a general locally compact group, requires the concept of locally null set.  A subset $A\subset G$ is \emph{locally null} if $A\cap K$ is of Haar measure zero  for every compact set $K\subset G$ and two functions $f,g\colon G\to \C$ are   equal \emph{locally almost everywhere} (l.a.e.),  if there is a locally null subset $A$ such that $f(x)=g(x)$ for all $x\notin A$.
  Following  \cite[Section 12]{hewiross1} or \cite[Page 46]{Fo} (and departing  from the most usual definition), we identify $L^\infty(G)$ with
  the vector space of all equivalence classes of essentially bounded and locally   measurable functions,  two functions being equivalent if and only if they  are equal locally almost everywhere.
   If $G$ is $\sigma$-compact (so that Haar measure is $\sigma$-finite), a subset $A\subset G$ is null if and only if it is locally  null, hence  this construction of $L^\infty(G)$  coincides with the standard one as $L^\infty(G,\lambda_{\G})$.

We may also recall that a function $f\in \CB(G)$ is {\it weakly almost periodic} when the set of its left
(or equivalently, right) translates $\{f_s:s\in G\}$  is relatively weakly compact in $\CB(G);$ and that Grothendieck's famous iterated limit criterion
shows that a function $f\in \CB(G)$ is in $\wap(G)$ if and only if, for any sequences $(x_n)$ and
$(y_m)$ in $G$, \[\lim_{n\to\infty}\lim_{m\to\infty} f(x_ny_m)=
\lim_{m\to\infty}\lim_{n\to\infty} f(x_ny_m)\]
whenever these iterated limits exist; see for example
\cite[Appendix A]{BJM}.

The reason for the space $\luc(G)$ to be denoted as such in this paper and elsewhere in the literature
is that it may also be given as
\[\luc(G)=\{f\in \CB(G)\;:\;s\mapsto f_s\quad\text{is norm continuous}\}.\]
Functions with this  property are usually called  {\it left norm continuous} and can be defined in any semitopological semigroup where a natural uniformity may not be available, see  \cite[Section 4.4]{BJM}.

We recall that $\wap(L^1(G))=\wap(G)$ (see \cite{U}) and that
\[\wap(G)\subseteq \luc(G)\subseteq \CB(G),\] see for example \cite[Theorem 4.4.10]{BJM} for the first inclusion.
This means that if there is a linear isometric copy of $L^\infty(G)$ in the quotient space ${\dsp\frac{L^\infty(G)}{\CB(G)}}$,
the same copy is also  in the quotient  ${\dsp\frac{L^\infty(G)}{\wap(G)}}$, and so the group algebra $L^1(G)$
is extremely non-Arens regular. We shall use this observation without any reference.
\medskip

\subsection{Outline of our approach}
In each of the papers cited above dealing with the extreme non-Arens regularity of $L^1(G)$, the proof relied on finding a copy of $\ell_\infty(d)$ in the quotient space ${\dsp\frac{L^\infty(G)}{\wap(G)}}$,
where $d=  \max\{\kappa(G),w(G)\}$ is the density character of $L^1(G)$, then embedding  $L^\infty(G)$  in $\ell_\infty(d)$. This method was efficacious when $\kappa(G)\ge w(G)$ or when $G$ is metrizable.
However, it is very likely to fail for non-metrizable compact groups. In fact, Rosenthal proved in \cite[Proposition 4.7, Theorem 4.8]{rose70}, that
regardless of the size of the base at the identity of $G$, when $G$ is compact, $\ell_\infty(\kappa)$ does not embed in $L^\infty(G)$ (and so not in the quotient space ${\dsp\frac{L^\infty(G)}{\wap(G)}}$ either)  when $\kappa>\omega$.
So we need a different and a more elaborate method to reach our aim. Our strategy shall rely on the following three keys:
\begin{itemize}
\item We first deal with $\sigma$-compact groups of the form  $G=M\times H,$
 with $M$ a  \emph{ non-discrete, metrizable and $\sigma$-compact} group. For such a group $G$, we show in Theorem  \ref{main:constrvv2}
 that the  quotient ${\dsp\frac{L^\infty(G)}{\CB(G)}}$  contains a copy of $\ell^\infty(L^\infty(H))$, a result inspired by  \cite[Theorem 2.11]{FG1} whose ideas date back at least to \cite{C1}.

This theorem is the first tool and is already used in {\it Corollary \ref{prodenar}} to deduce that there is a linear isometric copy of $L^\infty(G)$ in the quotient space ${\dsp\frac{L^\infty(G)}{\CB(G)}}$ whenever $G$  contains an open subgroup which is Haar homeomorphic to a locally compact group of the form given above.
 So $L^1(G)$ is extremely non-Arens regular for such groups.

\item The second key is Lemma \ref{lem:cond}, which comes in  Section \ref{7Isom}, a technical section.
The lemma isolates a set of  conditions which go in the spirit of the previous and forthcoming sections and
which provide seven linear isometries required for the final task.
This lemma  reveals  how to glue together these seven linear isometries giving a linear isometric copy of $L^\infty(G)$ in the quotient space ${\dsp\frac{L^\infty(G)}{\CB(G)}}$, and so the extreme non-Arens regularity of the group algebra.

\item The work by Grekas and Mercourakis \cite{GM} is our third  main tool and is presented in section \ref{7Isom}.
It proves that every compact group $K$ can be sandwiched between two products of metrizable groups.
This, together with Lemma \ref{lem:cond} and  our first key, will lead to  {\it Corollary \ref{cor:compenar}} to the effect that the quotient space ${\dsp\frac{L^\infty(K)}{\CB(K)}}$ contains a linear isometric copy of $L^\infty(K))$ for any infinite compact group.
Thus, $L^1(K)$ is extremely non-Arens regular for any infinite compact group $K$.
\end{itemize}

Once these three keys are available, the final step will be  based on theorems  due to Davis \cite{davis55} and Yamabe \cite{Ya}. The problem of finding a linear isometric copy of $L^\infty(G)$ in ${\dsp\frac{L^\infty(G)}{\CB(G)}}$ as well as of extreme non-Arens regularity of the group algebra of any locally compact group is reduced to that of $L^1(\R^n\times K)$, where $K$ is a compact group.
 {\it Corollary \ref{prodenar}} and {\it Corollary \ref{cor:compenar}} lead then immediately to the proof of Theorem B.

Since we know from \cite{BF} or  \cite{FV} that the group algebra is extremely non-Arens regular for any infinite discrete group, Theorem A is then an immediate consequence.

\section{Notation and terminology}
 Being concerned as we are with isometries into quotients of $L^\infty(G)$-spaces we deal with   locally compact groups and their Haar measures and with Banach spaces  and isometries between them.
 We summarize here  our  terminology and some basic results to be used throughout the paper.
\subsection{Topological groups and Haar measure}
The density character of a topological space $X$ is the least cardinality of a dense subset of $X$. We denote it by $d(X).$

The local weight $w(G)$ of locally compact group $G$ is the least cardinality of an open base at the identity of $G$.
The compact covering $\kappa(X)$ of a topological space $X$ is the least cardinality of a compact covering of $X$.

If $G$ is a locally compact group $\lambda_{\G}$ will denote the left Haar measure of $G$.
The characteristic function of  a set $V$ will be denoted as $\chi_V$.
\subsection{Banach spaces and isometries}
If $E$ is a Banach space  and $I$ is  a nonempty index  set, $\ell^\infty(I ,E)$ will denote as usual the linear space of all families $\mathbf{x}=(x_i)_{i\in I}$ with $x_i \in E$ and with $\sup\left\{\|x_i\|_{E}\colon i\in I\right\}<\infty$. Equipped with the norm
$\|\mathbf{x}\|=\sup\left\{\|x_i\|_E\colon i\in I\right\}$, $\ell^\infty(I,E)$ turns into a Banach space. The particular case $\ell^\infty(\omega,E)$ will be be denoted simply as $\ell^\infty(E)$. Of course, when $|I|=1$, $\ell^\infty(I,E)$ is just the Banach space $E$. Cardinal numbers will be identified with their initial ordinals, so that the above definition makes sense for $\ell(\alpha,E)$ when $\alpha$ is a cardinal number.

By an isometry between  two Banach spaces $E_1$ and $E_2$ we understand a map $T_1\colon E_1\to E_2$ with $\|T(v)\|_{E_2}=\|v\|_{E_1}$ for all $v\in E_1$. Note that we do not assume that isometries are onto. The following straightforward lemma will help to smoothen some of the proofs.

\begin{lemma}
  \label{lem:quotiso}
  Let $E_1$ and $E_2$ be  Banach spaces, let $F_1$ and $F_2$ be  closed subspaces of $E_1$ and $E_2$, respectively, and let $\alpha$ be a cardinal number. Suppose  $T\colon E_1\to E_2$ is a linear  isometry that satisfies the following properties:
  \begin{align}
    \label{*}\tag{$\ast$}
    &T(F_1)\subset F_2\; \mbox{ and}\\    \label{**}\tag{$\ast\ast$}
&\mbox{\it \small for each  $x\in F_2$,  there is $y_x\in F_1$ with
$\|T(v)-x\|_{E_2}\geq \|v-y_x\|_{E_1}$, for every $v\in E_1$}.\end{align}
 Then, $T$ induces a linear isometry $\begin{CD}
  {\displaystyle
  \widetilde{T}\colon {\dsp\frac{\ell^\infty(\alpha,E_1)}{\ell^\infty(\alpha,F_1)}}}@>>>
{\dsp\frac{\ell^\infty(\alpha,E_2)}{\ell^\infty(\alpha,F_2)}}.
\end{CD}$
\end{lemma}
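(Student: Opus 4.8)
The plan is to build $\widetilde{T}$ from the obvious coordinatewise map induced by $T$ and then reduce everything to a single distance identity in $E_1$ and $E_2$. First I would define $S\colon \ell^\infty(\alpha,E_1)\to\ell^\infty(\alpha,E_2)$ by $S\big((x_i)_{i}\big)=(T(x_i))_{i}$. Since $T$ is a linear isometry, $\|T(x_i)\|_{E_2}=\|x_i\|_{E_1}$ for each $i$, so $S$ is well defined (the image family is bounded with the same supremum), is linear, and satisfies $\|S(\mathbf{x})\|=\sup_i\|T(x_i)\|_{E_2}=\sup_i\|x_i\|_{E_1}=\|\mathbf{x}\|$; thus $S$ is itself a linear isometry. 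Property \eqref{*} gives $S\big(\ell^\infty(\alpha,F_1)\big)\subset\ell^\infty(\alpha,F_2)$, because a family taking values in $F_1$ is sent to one taking values in $F_2$. Consequently $S$ carries cosets to cosets and induces a well-defined linear map $\widetilde{T}$ on the quotients; it remains only to verify that $\widetilde{T}$ preserves the quotient norm.

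The next step is to express each quotient norm as a supremum of pointwise distances. I claim that for $\mathbf{x}=(x_i)_i$,
\[\big\|\mathbf{x}+\ell^\infty(\alpha,F_1)\big\|=\sup_i\,\mathrm{dist}(x_i,F_1),\]
and likewise in $E_2$. The inequality $\geq$ is immediate, since any $\mathbf{f}=(f_i)\in\ell^\infty(\alpha,F_1)$ satisfies $\|x_i-f_i\|_{E_1}\geq\mathrm{dist}(x_i,F_1)$ coordinatewise. For $\leq$, given $\varepsilon>0$ I would pick, for each $i$, an $f_i\in F_1$ with $\|x_i-f_i\|_{E_1}<\mathrm{dist}(x_i,F_1)+\varepsilon$; the point to check is that these independent choices assemble into a single element of $\ell^\infty(\alpha,F_1)$, and this holds because $\|f_i\|_{E_1}\leq\|x_i\|_{E_1}+\|x_i-f_i\|_{E_1}\leq\|\mathbf{x}\|+\sup_j\mathrm{dist}(x_j,F_1)+\varepsilon$ is uniformly bounded in $i$. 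Letting $\varepsilon\to0$ gives the claimed formula. This boundedness bookkeeping is the only place where any care at all is needed.

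Finally I would prove the pointwise distance identity $\mathrm{dist}(T(v),F_2)=\mathrm{dist}(v,F_1)$ for every $v\in E_1$, which, combined with the two norm formulas applied to $\mathbf{x}$ and to $S(\mathbf{x})$, yields $\big\|\widetilde{T}\big(\mathbf{x}+\ell^\infty(\alpha,F_1)\big)\big\|=\big\|\mathbf{x}+\ell^\infty(\alpha,F_1)\big\|$ and finishes the proof. For $\leq$, property \eqref{*} together with the fact that $T$ is a linear isometry gives $\|T(v)-T(f)\|_{E_2}=\|v-f\|_{E_1}$ with $T(f)\in F_2$, so $\mathrm{dist}(T(v),F_2)\leq\inf_{f\in F_1}\|v-f\|_{E_1}=\mathrm{dist}(v,F_1)$. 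For $\geq$, property \eqref{**} is precisely what is needed: each $x\in F_2$ comes with $y_x\in F_1$ such that $\|T(v)-x\|_{E_2}\geq\|v-y_x\|_{E_1}\geq\mathrm{dist}(v,F_1)$, and taking the infimum over $x\in F_2$ gives $\mathrm{dist}(T(v),F_2)\geq\mathrm{dist}(v,F_1)$. Since the whole argument is assembled from norm-preservation of $T$ and the two hypotheses (one supplying each inequality), I expect no genuine obstacle; the only mild subtlety is the uniform-boundedness verification in the quotient-norm formula of the second paragraph.
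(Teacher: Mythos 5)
Your proposal is correct and follows essentially the same route as the paper: the pointwise identity $\mathrm{dist}(T(v),F_2)=\mathrm{dist}(v,F_1)$, with the lower bound from \eqref{**} and the upper bound from \eqref{*} together with $\|T(v)-T(y)\|_{E_2}=\|v-y\|_{E_1}$, is precisely the paper's $\alpha=1$ argument. Your supremum-of-distances formula for the quotient norm is exactly the identification of $\frac{\ell^\infty(\alpha,E_i)}{\ell^\infty(\alpha,F_i)}$ with $\ell^\infty\bigl(\alpha,\frac{E_i}{F_i}\bigr)$ that the paper declares ``routinely proved''; you have simply supplied the routine details.
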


\begin{proof}
  We first prove  the case $\alpha=1$.

Conditions \eqref{*} and \eqref{**} imply  that $T(F_1)=F_2\cap T(E_1)$. Therefore,  if
  $q_i \colon E_i\to {\dsp\frac{E_i}{F_i}}$, $i=1,2$ denote the respective quotient mappings, we can define a 1-1 map $\widetilde{T}$ in such a way that the following diagram commutes:
\[\begin{CD}
E_1@>T>>E_2\\
@VVq_1V@VVq_2V\\ \dsp  {\dsp\frac{E_1}{F_1}}@>\widetilde{T}>>{\dsp \frac{E_2}{F_2}}
\end{CD}\]
If $v\in E_1$ and $x\in F_2$, condition \eqref{**} implies that $\|T(v)-x\|_{E_2}\geq \|q_1(v)\|_{\frac{E_1}{F_1}}$, therefore \[\|\widetilde{T}(q_1(v))\|_{\frac{E_2}{F_2}}=\|q_2(T(v))\|_{\frac{E_2}{F_2}}\geq \|q_1(v)\|_{\frac{E_1}{F_1}}.\]
The other  inequality is even simpler:
\begin{align*}
  \|\widetilde{T}(q_1(v))\|_{E_2/F_2}&=\inf\left\{\|T(v)-x\|_{E_2}\colon x\in F_2\right\}\\
  &\leq \inf\left\{\|T(v)-T(y)\|_{E_2}\colon y\in F_1\right\}\\
  &= \inf\left\{\|v-y\|_{E_1}\colon y\in F_1\right\}\\
  &=\|q_1(v)\|_{E_1/F_1}.
\end{align*}
Thus, $\widetilde{T}$ is an isometry, it is obviously linear.

To deduce the general case from the case $\alpha=1$, it is enough to observe that ${\displaystyle {\dsp\frac{\ell_\infty(\alpha, E_i)}{\ell_\infty(\alpha, F_i)}}}$ is naturally isometric to
${\displaystyle \ell_\infty\left(\alpha,{\dsp\frac{E_i}{F_i}}\right)}$,  a fact that can be routinely proved.
\end{proof}
Note that conditions \eqref{*} and \eqref{**} are automatically satisfied when $T(F_1)=F_2$.

\section{${\dsp \frac{L^\infty(G\times H)}{\CB(G\times H)}}$ contains $\ell^\infty(L^\infty(H))$}

We start with our first key theorem. The metrizability and the non-discreteness of the factor group $M$ are used to get a copy of $\ell^\infty(L^\infty(H))$ in the quotient space ${\dsp\frac{L^\infty(M\times H)}{\CB(M\times H)}}$.
This shall be applied in each of the theorems leading to the extreme non-Arens regularity of the group algebra.
This section produces isometry $\mathbf\Psi_5.$

\begin{theorem}\label{main:constrvv2}
 Let $H$ be any $\sigma$-compact locally compact group and let $M$ be a  \emph{non-discrete metrizable } $\sigma$-compact group. Then there exists a linear  map
 \[\Psi\colon \ell^\infty(L^\infty(H))\to L^\infty(M\times H)\]
 which satisfies the following properties:
 \begin{enumerate}
   \item $\|\Psi(\mathbf{\xi})\|=\|\mathbf{\xi}\|$,
       \item  $\|\Psi(\mathbf{\xi})-\phi\|\geq \|\mathbf{\xi}\|$
 \end{enumerate}
 for every $\mathbf{\xi}\in \ell^\infty(L^\infty(H))$ and $\phi \in \CB(M\times H)$.
%
\end{theorem}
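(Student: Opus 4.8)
The plan is to define $\Psi$ explicitly by spreading the coordinates of $\mathbf\xi=(\xi_n)_{n\in\omega}$ over a family of slices $B\times H$, where the sets $B\subset M$ shrink towards the identity $e$ of $M$, and to arrange the \emph{signs} so that no continuous function can follow the resulting oscillation. Concretely, I would fix a sequence of pairwise disjoint measurable sets $(B_k)_{k\in\omega}$ in $M$, each of positive Haar measure, with $t_k\to e$ for chosen centres $t_k\in B_k$, and I would split $\omega$ into countably many pairwise disjoint infinite sets, grouping these into pairs $(A_n^+,A_n^-)$, one pair per coordinate $n$. Writing $V_n=\bigcup_{k\in A_n^+}B_k$ and $W_n=\bigcup_{k\in A_n^-}B_k$, I would set
\[
\Psi(\mathbf\xi)(x,h)=\sum_{n}\bigl(\chi_{V_n}(x)-\chi_{W_n}(x)\bigr)\,\xi_n(h),
\]
so that $\Psi(\mathbf\xi)$ equals $\xi_n$ on $V_n\times H$, equals $-\xi_n$ on $W_n\times H$, and vanishes elsewhere. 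This is manifestly linear; since the $B_k$ are disjoint and countable and each $\xi_n$ is locally measurable, $\Psi(\mathbf\xi)$ is a well-defined element of $L^\infty(M\times H)$ (at each point the sum has at most one nonzero term), using that $M\times H$ is $\sigma$-compact so that $L^\infty$ is the ordinary space.

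The sets $B_k$ are exactly where metrizability and non-discreteness of $M$ enter. Non-discreteness makes $e$ a non-isolated point, so there is a sequence of distinct $t_k\to e$, and metrizability lets me choose radii $r_k\downarrow 0$ small enough that the balls $B_k=B(t_k,r_k)$ are pairwise disjoint and avoid $e$; being nonempty open sets in a locally compact group, each has positive Haar measure. The decisive feature of this choice is that for \emph{every} coordinate $n$, \emph{both} $V_n$ and $W_n$ accumulate at $e$, since $A_n^+$ and $A_n^-$ are infinite and $t_k\to e$.

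Property (1) is the routine verification: $|\Psi(\mathbf\xi)|\le\sup_n\|\xi_n\|_\infty=\|\mathbf\xi\|$ almost everywhere, while on each positive-measure slice $B_k\times H$ the function equals $\pm\xi_n$, whose essential supremum is $\|\xi_n\|_\infty$; taking the supremum over $n$ yields $\|\Psi(\mathbf\xi)\|=\|\mathbf\xi\|$. The heart of the matter, and the main obstacle, is property (2), where I must recover the \emph{full} norm $\|\mathbf\xi\|$ and not merely a fraction of it. Given $\phi\in\CB(M\times H)$, set $c=\|\Psi(\mathbf\xi)-\phi\|_\infty$; by Fubini, for almost every $h$ one has $|\Psi(\mathbf\xi)(x,h)-\phi(x,h)|\le c$ for almost every $x$. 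Fixing $n$ with $\|\xi_n\|_\infty$ near $\|\mathbf\xi\|$ and a good $h$ (also lying in the positive-measure set where $|\xi_n(h)|$ is near $\|\xi_n\|_\infty$), I invoke continuity of $\phi(\cdot,h)$: on the open set $B_k$ the a.e.\ bound forces the continuous function $x\mapsto|\xi_n(h)-\phi(x,h)|$ to be $\le c$ everywhere, in particular at the centre $t_k$. Letting $k\to\infty$ along $A_n^+$ and along $A_n^-$ separately, and using $t_k\to e$, produces the two inequalities $|\xi_n(h)-\phi(e,h)|\le c$ and $|\xi_n(h)+\phi(e,h)|\le c$.

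Adding these gives $2c\ge 2|\xi_n(h)|$, hence $c\ge|\xi_n(h)|$, which can be made as close to $\|\mathbf\xi\|$ as desired. It is precisely the opposite signs on the two interleaved families $V_n,W_n$ that upgrade the naive estimate $c\ge\tfrac12\|\mathbf\xi\|$ (all one obtains from a single family played against a $\phi$ that is nearly constant near $e$) to the isometric bound $c\ge\|\mathbf\xi\|$. I expect the only delicate points to be the bookkeeping that makes $\Psi(\mathbf\xi)$ a legitimate element of $L^\infty(M\times H)$ and the careful passage, via Fubini and continuity on the open balls $B_k$, from an essential-supremum bound on $M\times H$ to a genuine pointwise inequality at the limit point $e$.
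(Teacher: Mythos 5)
Your proposal is correct and follows essentially the same route as the paper's proof: a disjoint countable family of open sets in $M$ accumulating at the identity (the paper uses the annuli $U_n\setminus\overline{U_{n+1}}$ from a nested neighbourhood base, you use disjoint balls around a sequence $t_k\to e$), interleaved $+\xi_n$/$-\xi_n$ values on two infinite subfamilies per coordinate, and the Fubini-plus-continuity-at-$(e,h)$ argument to force the full lower bound $\|\mathbf{\xi}\|$. The remaining differences are cosmetic: the paper takes sections over $s\in M$ rather than $h\in H$ and runs the estimate as a contradiction with an explicit $\varepsilon$, whereas you extract the two inequalities at $\phi(e,h)$ directly and add them.
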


\begin{proof}
  Let $\{U_n\}_{n\in \N}$ denote a  basis of neighborhoods of the identity in $M$  with $U_{n+1}^2\subset U_n$ for every $n\in\N$.
 Put, for each $n\in\N$, $V_n=U_n\setminus \overline{U_{n+1}}$, and consider the disjoint family of open sets in $M$ given by $\{V_n:n\in\N\}$.
Next, partition $\N$ into $\omega$-many subsets $I_m$ each of cardinality $\omega$. Enumerate each $I_m$ as  $I_m=\{m_n\colon n\in \N\}$, with $m_n<m_{n+1}$.

For each $m\in \N$,  define $f_m\colon M\times H\to \C$ as
\[f_m(s,t)=\begin{cases}
   1, \quad \mbox{ if   } s\in V_{m_{2n}},\\
   -1, \quad \mbox{if   } s\in V_{m_{2n+1}},\\
   0,\quad \mbox{ otherwise}.
 \end{cases}\]

 Then
  $f_m\in L^\infty(M\times H)$, note that $f_m$   is simply
  \[{\displaystyle  f_m =\chi_{_{(\bigcup_n V_{m_{2n}})\times H}}-\chi_{_{(\bigcup_n V_{m_{2n+1}})\times H}}}.\]

  Define now for each
$\mathbf{\xi}=(\xi_m)_{m<\omega}\in\ell^\infty(L^\infty(H))$, the
function $f_{\mathbf{\xi}}\in L^\infty(M\times H)$  by
\[f_{\mathbf{\xi}}(s,t)=\sum_{m<\omega} \xi_m(t) f_m(s,t).
\]
Observe that $f_\xi$ is well-defined because the countable union of  null sets is null.

Then, consider the map $\Psi\colon \ell^\infty(L^\infty(H))\to
L^\infty(M\times H)$ given by \[ \Psi\left(\mathbf{\xi}\right)=
f_{\mathbf{\xi}}.\]
 It is easily observed   that $\Psi$ is linear and
 $\|\Psi(\mathbf{\xi})\|=
\|f_{\mathbf{\xi}}\|\leq \|\mathbf{\xi}\|$
for every $\mathbf{\xi}=(\xi_n)_{n<\omega}\in
\ell^\infty(L^\infty(H))$.

Moreover, since for each $s\in V_{m_{2n}}$, $f_{\mathbf{\xi}}(s,t)=\xi_m(t),$ we deduce that \[\|\Psi(\mathbf{\xi})\|=
\|f_{\mathbf{\xi}}\|=\|\mathbf{\xi}\|.\]

We now prove Statement (ii).
Since $\Psi$ is linear, we may assume without loss of generality that $\|\mb{\xi}\|=1$, and so we must show that \[\|f_\mb{\xi}-\phi\|\geq 1\quad\text{ for every}\quad \phi \in \CB(M\times H).\]
Suppose, otherwise, that for some $\varepsilon>0$ and some $\phi  \in \CB(M\times H)$, we have
\[\|f_\mb{\xi}-\phi\|< 1-\varepsilon.\]
Since $\|\mb{\xi}\|=\sup\{\|\xi_m\|:m\in\N\}=1,$ we may pick and fix $m\in\N$ such that \begin{equation}\label{norm1}\|\xi_m\|>1-\varepsilon/2.\end{equation}

 By Fubini's theorem,
 we can find a   null set $A\subset M$ such that
 $s\in M\setminus A$ implies that   the set \[C_s=\{t\in H\colon |    f_\mb{\xi}(s,t)-\phi(s,t)|\geq 1-\varepsilon\}\]
is null. The set $A$ can actually be described as \[A=\{s\in G:\lambda_H\left(C_s\right)>0\}.\]

 Let now $n\in\N$ be picked arbitrarily.

Being null, $A$ cannot contain either of the open sets $V_{m_{2n}}$ or $V_{m_{2n+1}}$.
We then choose $s_{m_{2n}}\in V_{m_{2n}}\setminus A$ and $s_{m_{2n+1}}\in V_{m_{2n+1}}\setminus  B$. Since $f_m$ takes the value 1 on $V_{m_{2n}}\times H$ and the value -1 on $V_{m_{2n+1}}\times H$, we see   that, for every $t\notin  C_{s_{m_{2n}}}\cup C_{s_{m_{2n+1}}}$:
\begin{align}
 \label{null1} \,\left|    \xi_m(t)-\phi(s_{m_{2n}},t)\right|\,&<1-\varepsilon, \mbox{ and  }
\\\label{null2}
\left|    \xi_m(t)+\phi(s_{m_{2n+1}},t)\right|&<1-\varepsilon.
\end{align}

On the other hand, our assumption \eqref{norm1} gives  a  null set $C_0\subset H$ such that \begin{equation}\label{null3}|\xi_m(t)|\geq 1-\displaystyle \frac{\varepsilon}{2}\; \mbox{ for every } t\in H,\;\; t\notin C_0.\end{equation}
Choose finally $t\notin C_0 \bigcup \left(\bigcup_{k} C_{s_{m_{k}}}\right)$ and suppose that $\xi_m(t)>0$. Then,  by  \eqref{null1}, \eqref{null2} and \eqref{null3}, we have for every $n\in\N$,
\[\phi(s_{m_{2n}},t)>\varepsilon/2\quad\text{ while}\quad
\phi(s_{m_{2n+1}},t)<-\varepsilon/2.\] If we observe that, by construction, $(s_{m_n})$ converges to $e$, we find that these last inequalities go against the continuity of $\phi$ at $(e,t)$. If $\xi_m(t)<0$, a similar argument leads to the same contradiction.
\end{proof}

We come to the fifth linear isometry needed for our diagram.

\begin{corollary}\label{cor:mainvv}
  Let $\alpha$ be an infinite cardinal, and let $M$ and $H$ be locally compact $\sigma$-compact groups with $M$ non-discrete and metrizable. Then  there exists a linear isometry \[\mathbf\Psi_5: \ell^\infty(\alpha,L^\infty(H))\to{\dsp\frac{\ell^\infty(\alpha,\;L^\infty(M\times H))}{\ell^\infty(\alpha,\;\CB(M\times H))}}.\]
\end{corollary}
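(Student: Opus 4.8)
The plan is to feed the map $\Psi$ produced in Theorem \ref{main:constrvv2} into Lemma \ref{lem:quotiso}, taking the lower space on the domain side to be trivial. Concretely, I would set $E_1=\ell^\infty(L^\infty(H))$, $E_2=L^\infty(M\times H)$, $F_1=\{0\}$, $F_2=\CB(M\times H)$, and let $T=\Psi$. Property (i) of Theorem \ref{main:constrvv2} says precisely that $\Psi$ is a linear isometry of $E_1$ into $E_2$, so $T$ qualifies as the isometry required by the lemma.

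Next I would check the two hypotheses \eqref{*} and \eqref{**}. Condition \eqref{*} is immediate: since $0\in\CB(M\times H)$, we have $T(F_1)=T(\{0\})=\{0\}\subset F_2$. (Note that the remark following Lemma \ref{lem:quotiso} does not apply, since here $T(F_1)=\{0\}\neq F_2$, so the conditions must be checked directly.) For condition \eqref{**}, given any $x\in F_2=\CB(M\times H)$ the only available element of $F_1$ is $y_x=0$, so the requirement reduces to $\|T(v)-x\|_{E_2}\geq\|v\|_{E_1}$ for all $v\in E_1$; but this is exactly property (ii) of Theorem \ref{main:constrvv2}, read with the variable $\mathbf{\xi}$ in the role of $v$ and $\phi$ in the role of $x$. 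Thus both hypotheses hold.

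With the hypotheses verified, Lemma \ref{lem:quotiso} yields a linear isometry
\[\widetilde{\Psi}\colon \frac{\ell^\infty(\alpha,E_1)}{\ell^\infty(\alpha,F_1)}\longrightarrow\frac{\ell^\infty(\alpha,L^\infty(M\times H))}{\ell^\infty(\alpha,\CB(M\times H))}.\]
Because $F_1=\{0\}$, the denominator $\ell^\infty(\alpha,F_1)$ is trivial, so the domain is simply $\ell^\infty(\alpha,\ell^\infty(L^\infty(H)))$. The final step is a reindexing: recalling that $\ell^\infty(L^\infty(H))=\ell^\infty(\omega,L^\infty(H))$, the iterated space $\ell^\infty(\alpha,\ell^\infty(\omega,L^\infty(H)))$ is canonically isometric to $\ell^\infty(\alpha\times\omega,L^\infty(H))$, and since $\alpha$ is infinite we have $|\alpha\times\omega|=\alpha$, so this is in turn isometric to $\ell^\infty(\alpha,L^\infty(H))$. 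Precomposing $\widetilde{\Psi}$ with this identification produces the desired isometry $\mathbf\Psi_5$.

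There is no genuine obstacle here: all the analytic content is already contained in Theorem \ref{main:constrvv2}, and Lemma \ref{lem:quotiso} is designed exactly to promote a single isometry satisfying \eqref{*} and \eqref{**} to the level of the $\ell^\infty(\alpha,\cdot)$ quotients. The only point demanding the smallest care is the closing reindexing, which rests on the elementary cardinal identity $|\alpha\times\omega|=\alpha$ together with the associativity of the $\ell^\infty$ construction under the supremum norm, a fact entirely analogous to the identification invoked at the end of the proof of Lemma \ref{lem:quotiso}.
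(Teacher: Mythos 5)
Your proposal is correct and follows exactly the paper's own argument: apply Lemma \ref{lem:quotiso} to the isometry $\Psi$ of Theorem \ref{main:constrvv2} with $F_1=\{0\}$ and $F_2=\CB(M\times H)$, then reindex $\ell^\infty(\alpha,\ell^\infty(L^\infty(H)))$ as $\ell^\infty(\alpha,L^\infty(H))$ via a bijection of $\alpha\times\omega$ with $\alpha$. The only difference is that you spell out the verification of conditions \eqref{*} and \eqref{**}, which the paper leaves implicit.
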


\begin{proof}
   Lemma \ref{lem:quotiso} applied to  the isometry $\Psi$ constructed in Theorem \ref{main:constrvv2} and the subspaces  $F_1=\{0\} $ and $F_2=\CB(M\times H)$ gives a linear isometry
    \[
 \xymatrix{ \dsp \widetilde{\Psi}\colon \ell^\infty\bigl(\alpha,\ell^\infty\left(L^\infty(H)\right) \bigr)\ar[r]&\dsp
  \frac{\ell^\infty\left(\alpha,L^\infty(M\times H)\right)}{\ell^\infty\left(\alpha,\CB(M\times H)\right)}}.\]

Using a bijection between $\alpha\times \omega$ and $\alpha$ we may identify  the Banach spaces $\ell^\infty(\alpha,L^\infty(H))$ and  $\ell^\infty(\alpha\times\omega,L^\infty(H))$. This latter space is identified with $\ell^\infty(\alpha,\ell^\infty(L^\infty(H)))$ through the linear isometry
\[((\xi^{(n)}_\eta)_{n<\omega})_{\eta<\alpha}\mapsto (\xi_{(\eta,n)})_{(\eta,n)\in \alpha\times\omega},\]
where, for each $\eta<\alpha$, $(\xi^{(n)}_\eta)_{n<\omega}\in \ell^\infty(L^\infty(H))$ and $\xi_{(\eta,n)}=\xi^{(n)}_\eta$ for each $(\eta,n)\in \alpha\times \omega$.

In this way the isometry $\widetilde{\Psi}$ turns into the desired isometry $\mathbf{\Psi_5}$.
\end{proof}

\section{$L^\infty(M\times H)$ is contained in $\ell^\infty(L^\infty(H))$}
This section will produce the first, the third and the seventh of the planned linear isometries.

We first recall that, for any given locally compact group,   $d(L^p(G))\le \max\{\kappa(G),w(G)\}$, if  $1\le p<\infty$, see \cite[Lemma 7.3]{HN1}.
In particular, $d(L^1(G))=\omega$ when $G$ is an infinite, $\sigma$-compact, metrizable locally compact group.

Statement (ii) of next lemma already gives the third linear isometry $\mathbf\Psi_3.$

\begin{lemma}\label{Lintoell} Let $M$ and $H$ be
locally compact groups and put $d=d(L^1(M))$. If $\alpha$ is
be an infinite cardinal, then the following statements hold.
 \begin{enumerate}
\item There is a linear isometry of $L^\infty(M\times H)$ into $\ell^\infty(d, L^\infty(H))$.
\item If $d\le\alpha$, then there is a linear isometry \[\mathbf\Psi_3:\ell^\infty(\alpha, L^\infty(M\times H))\to \ell^\infty(\alpha, L^\infty(H)).\]
\end{enumerate}
\end{lemma}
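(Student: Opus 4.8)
The plan is to identify $L^\infty(M\times H)$ with a space of operators and to read off both statements from this identification. The key observation is that, under the paper's convention $L^\infty(G)=L^1(G)^*$, the classical isometric identity $L^1(M\times H)\cong L^1(M)\widehat{\otimes}L^1(H)$ together with the duality of the projective tensor product yields
\[L^\infty(M\times H)\cong\bigl(L^1(M)\widehat{\otimes}L^1(H)\bigr)^*\cong \mathcal{L}\bigl(L^1(M),L^\infty(H)\bigr),\]
an isometric isomorphism onto the space of bounded operators from $L^1(M)$ into $L^\infty(H)$. Concretely, $F\in L^\infty(M\times H)$ corresponds to the operator $\Phi_F$ with $\Phi_F(g)=F_g\in L^\infty(H)$ determined by $\langle F_g,h\rangle=\langle F,g\otimes h\rangle$ for $h\in L^1(H)$, and $\|F\|_\infty=\|\Phi_F\|=\sup\{\|F_g\|_{L^\infty(H)}:\|g\|_{L^1(M)}\le1\}$.

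For statement (i), first choose a family $\{g_i:i<d\}$ that is dense in the unit ball of $L^1(M)$; this is possible because $L^1(M)$ is a metric space of density character $d$, so its unit ball has density character at most $d$. Since $g\mapsto\Phi_F(g)$ is linear and bounded, the map $g\mapsto\|\Phi_F(g)\|_{L^\infty(H)}$ is continuous, and therefore the supremum over the dense set $\{g_i\}$ coincides with the supremum over the whole unit ball. Hence the assignment
\[T\colon L^\infty(M\times H)\longrightarrow \ell^\infty(d,L^\infty(H)),\qquad T(F)=(F_{g_i})_{i<d},\]
is well defined, linear, and satisfies $\|T(F)\|=\sup_{i<d}\|F_{g_i}\|_{L^\infty(H)}=\|\Phi_F\|=\|F\|_\infty$; that is, $T$ is the desired linear isometry.

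To obtain (ii), I would apply $T$ coordinatewise. Sending $(x_\eta)_{\eta<\alpha}$ to $(T(x_\eta))_{\eta<\alpha}$ defines a linear isometry
\[\ell^\infty(\alpha,L^\infty(M\times H))\longrightarrow \ell^\infty\bigl(\alpha,\ell^\infty(d,L^\infty(H))\bigr),\]
because the norm on each side is a supremum of the coordinate norms and $T$ preserves these. The right-hand space is isometric to $\ell^\infty(\alpha\times d,L^\infty(H))$, and since $d\le\alpha$ with $\alpha$ infinite we have $|\alpha\times d|=\alpha$; a bijection $\alpha\times d\to\alpha$ then gives an isometry onto $\ell^\infty(\alpha,L^\infty(H))$. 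Composing these maps produces $\mathbf{\Psi}_3$.

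I expect the only genuinely delicate point to be the isometric identity underlying (i), namely that testing $F$ against a merely dense set of $g$'s still recovers the full essential supremum norm. This is clean once it is phrased through the duality $L^\infty(M\times H)=\mathcal{L}(L^1(M),L^\infty(H))$, but it does rely on the classical measure-theoretic identification $L^1(M\times H)\cong L^1(M)\widehat{\otimes}L^1(H)$, which must be invoked with care in the general locally compact setting where $L^\infty$ is taken in the locally-null sense; since the paper defines $L^\infty(G)$ as $L^1(G)^*$ throughout, no pointwise representation of $F_g$ is needed and the argument goes through verbatim. The remaining steps — density of the unit ball, the coordinatewise isometry, and the cardinal arithmetic $|\alpha\times d|=\alpha$ — are routine.
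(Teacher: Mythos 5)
Your proof is correct and is essentially the paper's argument: the paper also slices $f$ against a norm-dense family $\{u_i\}_{i<d}$ in the unit ball of $L^1(M)$ via $\langle\Psi(f)_i,v\rangle=\int_M\int_H f(s,t)u_i(s)v(t)\,d\lambda_H\,d\lambda_M$, which is exactly your $F\mapsto(F_{g_i})_{i<d}$, and it recovers the norm from the density of elementary tensors (characteristic functions of measurable rectangles) in the unit ball of $L^1(M\times H)$ — the same fact you package as $L^1(M\times H)\cong L^1(M)\widehat{\otimes}L^1(H)$ and the duality $(X\widehat{\otimes}Y)^*\cong\mathcal{L}(X,Y^*)$. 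The only difference is that the paper verifies this norm identity by hand rather than citing the tensor-product duality, and it derives (ii) from its Lemma \ref{lem:quotiso} with $F_1=F_2=\{0\}$ plus the same cardinal identification $\alpha\times d\leftrightarrow\alpha$ that you use.
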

\begin{proof} Let $\{u_i:i<d\}$ be a norm dense subset in the unit ball of $L^1(M)$.
Let $\lambda_M$ and $\lambda_H$ be fixed left Haar measures on $M$ and $H,$ respectively.
 We define a map: \begin{align*}\Psi: L^\infty(M\times H)&\to \ell^\infty(d,L^\infty(H))\\
f&\mapsto (\Psi(f)_i)_{i<d},\end{align*}
where $\Psi(f)_i$ is given for every $v\in L^1(H)$ by
\begin{align*} <\Psi(f)_i,v>=\int_M \left(\int_H f(s,t)u_i(s)v(t)d\lambda_H(t)\right)d\lambda_M(s).\end{align*}
We claim than $\Psi$ is a linear isometry of $L^\infty(M\times H)$ into $\ell^\infty(d, L^\infty(H))$.

 $\Psi(f)_i$   defines obviously a linear functional on $L^1(H)$ for each $i<d$. In addition,
\begin{equation}\label{cont}\begin{split}
\left|<\Psi(f)_i,v>\right|  &\leq \int_M \left(\int_H |f(s,t)|\cdot |u_i(s)|\cdot |v(t)|d\lambda_H(t)\right)d\lambda_M(s)\\
&\leq \left\|f\right\|_{_{L^\infty(M\times H)}}\cdot \left(\int_M|u_i(s)|d\lambda_M(s) \right)\cdot \left(\int_H |v(t)|d\lambda_H(t)\right)\\&\leq \|f\|_{_{L^\infty(M\times H)}} \|v\|_{L^1(H)}.\end{split}
\end{equation}
Being a continuous linear functional on $L^1(H)$, $\Psi(f)_i$ can  indeed be identified with an element of $L^\infty(H)$.

Inequality   \eqref{cont} actually gives:
\[\|\Psi(f)\|_{\ell^\infty(d,L^\infty(H))}\leq \left\|f\right\|_{L^\infty(M\times H)}.\]

We now check the converse inequality. For every $i<d$ and every function $v$ in the unit ball of $L^1(H)$, we have
\begin{align*}\|\Psi(f)\|_{\ell^\infty(d,L^\infty(H))}&=
\sup_{i<d}\|\Psi(f)_i\|_{L^\infty(H)}
\\&=
\sup_{i<d}\sup\left\{|<\Psi(f)_i,w>|:\|w\|_{L^1(H)}\le 1\right\}
\\&
\ge
|<\Psi(f)_i,v>|
\\&=
\left|
\int_{M} \left(\int_H f(s,t)u_i(s)v(t)d\lambda_H(t)\right)d\lambda_M(s)\right|.
\end{align*}
Accordingly, \[\|\Psi(f)\|_{\ell^\infty(d,L^\infty(H))}\ge \left|\int_M \left(\int_H f(s,t) u(s) v(t)d\lambda_H(t)\right)d\lambda_M(s)\right|\]
for every $u$ and $v$ in the unit balls of $L^1(M)$ and $L^1(H)$, respectively.

Now, the set of maps \[\{(s,t)\mapsto u(s)v(t)\colon u\in L^1(M) \mbox{ and } v\in L^1(H) \mbox{ with } \|u\|\leq 1,\; \|v\|\leq 1\}\] is dense in the unit ball of $L^1(M\times H)$ (it contains the characteristic function of every measurable rectangle of   integrable sides). Therefore,
\[\|\Psi(f)\|_{\ell^\infty(d,L^\infty(H))}\ge
\left|\int_M \left(\int_H f(s,t) h(s,t) d\lambda_H(t)\right)d\lambda_M(s)\right|\] for every $h$ in the unit ball of $L^1(M\times H)$. In other words,
\[\|\Psi(f)\|_{\ell^\infty(d,L^\infty(H))}\ge \|f\|_{L^\infty(M\times H)},\] as required for the first statement.

The second statement is a direct consequence of the first one.
Applying Lema \ref{lem:quotiso} to the isometry $\Psi$  and the subspaces $F_1=F_2=\{0\}$ and using  $d\le\alpha$ to identify, as in Corollary \ref{cor:mainvv}, $\ell^\infty(\alpha,\ell^\infty(d,L^\infty(H)))$ with $\ell^\infty(\alpha,L^\infty(H))$,  we get the linear isometry:
\[\mathbf\Psi_3:\ell^\infty(\alpha, L^\infty(M\times H))\to  \ell^\infty(\alpha, L^\infty(H)).\]
\end{proof}

We can already give the first consequence on the extreme non-Arens regularity of the group algebra
for some locally compact groups.


Following \cite{GM}, we say that two locally compact groups  groups $G_1$ and $G_2$ with Haar measures  $\lambda_1$ and $\lambda_2,$
respectively, are {\it Haar homeomorphic} when there exists a homeomorphism $\psi:G_1\to G_2$ such that  $A\subset G_1$ is $\lambda_1$-measurable if and only if $\psi(A)$ is $\lambda_2$-measurable and $\lambda_1(A)=\lambda_2(\psi(A))$. Note that in this situation, the map $f\mapsto f\circ \psi$ establishes a linear isometry between the group algebras $L^\infty(G_1)$
 and $L^\infty(G_2)$  as well as between the  $C^*$-algebras $\CB(G_1)$ and $\CB(G_2)$, and so these spaces  may be identified.

 If the groups are $\sigma$-compact, this definition is  equivalent to the condition \[\int_{G_2}f(y)d\lambda_2=\int_{G_1}f(\psi(x))d\lambda_1\]
 for every $f\in L^1(G_2)$.

 For the proof of the following corollary, we shall need to construct two of the seven linear isometries, the first
 and the seventh.
\begin{corollary} \label{prodenar}
Let $G$ be a locally compact group with an open $\sigma$-compact subgroup $G_0$ which is Haar homeomorphic to a locally compact group of the form $M\times H,$ where $H$ is a locally compact group and $M$ is a non-discrete, metrizable  group.
 Then there exists a linear isometry from  $L^\infty(G)$ into $  {\dsp \frac{ L^\infty(G)}{\CB(G)}}$.
 In particular, the group algebra
$L^1(G)$ is extremely non-Arens regular.
\end{corollary}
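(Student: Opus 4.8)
The plan is to exploit the openness of $G_0$ to break $L^\infty(G)$ into an $\ell^\infty$-sum of copies of $L^\infty(G_0)$, one per coset, and then to treat each summand with the machinery already assembled. Fix a transversal $\Gamma$ for the left coset space $G/G_0$ and put $\alpha=|\Gamma|$, so that $G=\bigsqcup_{\gamma\in\Gamma}\gamma G_0$ is a partition into clopen, $\sigma$-compact pieces (each $\gamma G_0$ being homeomorphic to $G_0$). The first isometry $\mathbf\Psi_1$ will be the resulting identification $L^\infty(G)\cong\ell^\infty(\alpha,L^\infty(G_0))$, followed by the Haar-homeomorphism identification $L^\infty(G_0)\cong L^\infty(M\times H)$; the seventh isometry $\mathbf\Psi_7$ will be the analogous identification for the quotient.

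The key technical point behind $\mathbf\Psi_1$ and $\mathbf\Psi_7$ is that $L^\infty$ and $\CB$ decompose compatibly along the partition. First I would check that a set $A\subset G$ is locally null if and only if $A\cap\gamma G_0$ is null in $\gamma G_0$ for every $\gamma$; this is immediate from the fact that each $\gamma G_0$ is open and $\sigma$-compact and that every compact subset of $G$ meets only finitely many cosets. Consequently, restriction to the cosets gives an isometric identification $L^\infty(G)\cong\ell^\infty(\alpha,L^\infty(G_0))$ and, the cosets being also closed, $\CB(G)\cong\ell^\infty(\alpha,\CB(G_0))$, the norm in each case being the supremum of the fibre norms. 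Invoking the Haar homeomorphism $G_0\cong M\times H$ (which identifies $L^\infty(G_0)$ with $L^\infty(M\times H)$ and $\CB(G_0)$ with $\CB(M\times H)$ isometrically) together with the remark after Lemma \ref{lem:quotiso} that $\ell^\infty(\alpha,E)/\ell^\infty(\alpha,F)\cong\ell^\infty(\alpha,E/F)$, I obtain
\[\mathbf\Psi_1\colon L^\infty(G)\xrightarrow{\ \cong\ }\ell^\infty(\alpha,L^\infty(M\times H)),\qquad \mathbf\Psi_7\colon \ell^\infty\bigl(\alpha,\tfrac{L^\infty(M\times H)}{\CB(M\times H)}\bigr)\xrightarrow{\ \cong\ }\tfrac{L^\infty(G)}{\CB(G)}.\]

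It then remains to produce an isometry $\ell^\infty(\alpha,L^\infty(M\times H))\to\ell^\infty(\alpha,L^\infty(M\times H)/\CB(M\times H))$ to sit between $\mathbf\Psi_1$ and $\mathbf\Psi_7$. When $\alpha$ is infinite this is exactly $\mathbf\Psi_5\circ\mathbf\Psi_3$: by Lemma \ref{Lintoell}(ii) (with $d=d(L^1(M))=\omega\le\alpha$, since $M$ is metrizable and $\sigma$-compact) one has $\mathbf\Psi_3\colon\ell^\infty(\alpha,L^\infty(M\times H))\to\ell^\infty(\alpha,L^\infty(H))$, while Corollary \ref{cor:mainvv} supplies $\mathbf\Psi_5\colon\ell^\infty(\alpha,L^\infty(H))\to\ell^\infty(\alpha,L^\infty(M\times H))/\ell^\infty(\alpha,\CB(M\times H))$, whose target is identified with $\ell^\infty(\alpha,L^\infty(M\times H)/\CB(M\times H))$ as above. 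The composite $\mathbf\Psi_7\circ\mathbf\Psi_5\circ\mathbf\Psi_3\circ\mathbf\Psi_1$ is the sought linear isometry $L^\infty(G)\hookrightarrow L^\infty(G)/\CB(G)$. For the remaining case, when $[G:G_0]$ is finite (so $G$ is $\sigma$-compact), I would instead build a single fibre isometry $T\colon L^\infty(M\times H)\to L^\infty(M\times H)/\CB(M\times H)$ — composing the embedding of Lemma \ref{Lintoell}(i) into $\ell^\infty(\omega,L^\infty(H))$ with the map of Theorem \ref{main:constrvv2} followed by the quotient map (properties (i)–(ii) there make the composite an isometry) — and then spread it coordinatewise, since $(x_\gamma)_\gamma\mapsto(Tx_\gamma)_\gamma$ is an isometry on any $\ell^\infty(\alpha,\cdot)$; this device in fact covers all $\alpha\ge1$ uniformly.

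Finally, the ``in particular'' is immediate: the isometry just constructed is a closed (indeed isometric) copy of $L^\infty(G)=L^1(G)^*$ inside $L^\infty(G)/\CB(G)$, and, by the observation recorded after the inclusions $\wap(G)\subseteq\CB(G)$, the same copy lives in $L^\infty(G)/\wap(G)$; having $L^1(G)^*$ as a continuous linear image of itself, it witnesses extreme non-Arens regularity. I expect the main obstacle to be the bookkeeping around $\mathbf\Psi_1$ and $\mathbf\Psi_7$: verifying that $L^\infty(G)$ and $\CB(G)$ split simultaneously and isometrically over the (possibly uncountably many) cosets, with the correct handling of local-null sets when $G$ is not $\sigma$-compact, and reconciling the index cardinal $\alpha=[G:G_0]$ with the infinite-cardinal hypotheses of Corollary \ref{cor:mainvv} and Lemma \ref{Lintoell} (the finite-index case being absorbed by the coordinatewise spreading).
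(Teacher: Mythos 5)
Your proof is correct and follows essentially the same route as the paper's: decompose $L^\infty(G)$ and $\CB(G)$ isometrically over the cosets of $G_0$ (the paper's $\mathbf\Psi_1$ and $\mathbf\Psi_7$), then run the chain $L^\infty(M\times H)\to\ell^\infty(L^\infty(H))\to \frac{L^\infty(M\times H)}{\CB(M\times H)}$ supplied by Lemma \ref{Lintoell} and Theorem \ref{main:constrvv2}/Corollary \ref{cor:mainvv}. The one substantive difference is your treatment of finite index $\alpha=|G:G_0|$ by building the single-fibre isometry $T\colon L^\infty(M\times H)\to \frac{L^\infty(M\times H)}{\CB(M\times H)}$ first and spreading it coordinatewise over $\ell^\infty(\alpha,\cdot)$: this is a genuine (if small) improvement, since the paper's written argument invokes Lemma \ref{Lintoell}(ii), whose hypothesis $d(L^1(M))=\omega\le\alpha$ fails when $1<\alpha<\omega$, and your device handles all $\alpha\ge 1$ uniformly.
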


\begin{proof}
If $G$ is Haar homeomorphic to $G_0=M\times H,$ then Lemma \ref{Lintoell} and Corollary \ref{cor:mainvv} with  $\alpha=1$ give immediately
the chain of linear isometries
\[L^\infty(G)=L^\infty(M\times H)
\longrightarrow
\ell^\infty \left(L^\infty(H)\right)
\longrightarrow
\frac{\dsp L^\infty(M\times H)}{\CB(M\times H)}
=
\frac{\dsp  L^\infty(G)}{\CB(G)},
\]
leading to the claim.

If $G_0$ is a proper subgroup of $G$,
let $\alpha=|G:G_0|$ and  $\{x_\eta\colon  \eta<\alpha\}$ be a system of representatives of the right cosets of $G_0$ in $G$.

Now let $(\phi_\eta)_{\eta<\alpha}\in \ell^\infty\left(\alpha, L^\infty(G_0)\right),$
and for each $\eta<   \alpha,$ let $f_\eta $ be the function defined on $x_\eta G_0$ by $f_\eta(g)=\phi_\eta(x_\eta^{-1}g)$.  Define then $f:G\to \C$ by $f=f_\eta$ on $x_\eta G_0$, i.e., if the extensions to $G$ by zero of the functions $\phi_\eta$ and $f_\eta$ are denoted by the same letters,  then we may write
\[f=\sum_{\eta<\alpha} f_\eta \chi_{x_\eta G_0}.\]
  Since $A\subset G$ is locally null if and only if $A\cap x_\eta G_0$ is locally null for every $\eta<\alpha$ (see  \cite[Page 46]{Fo}),  the correspondence
\begin{equation} \begin{split} T:\;  \ell^\infty\left(\alpha, L^\infty(G_0)\right)&\longrightarrow  L^\infty(G)\\\label{isom0}
 (\phi_{\eta})_{\eta<\alpha}&\longmapsto f\end{split}\end{equation}
is a well-defined surjective linear isometry. We let  $\mathbf \Psi_1=T^{-1}.$

Furthermore, since
$\{ x_\eta G_0:\eta<\alpha\}$ is a  disjoint cover of $G$ made of closed and open sets, we see that the restriction of $T$ to $\ell^\infty(\alpha,\CB(G_0))$  is also a linear isometry onto $\CB(G)$, giving
raise, via Lemma \ref{lem:quotiso}, to an isometric isomorphism
\begin{equation}\label{isom}  \mathbf\Psi_7:\;\frac{\dsp \ell^\infty\left(\alpha, L^\infty(G_0)\right)}{\dsp \ell^\infty\left(\alpha,\CB(G_0)\right)}\longrightarrow  {\displaystyle \frac{ L^\infty(G)}{\dsp\CB(G)} }.\end{equation}

With Lemma \ref{Lintoell} (ii) and  Corollary \ref{cor:mainvv},  and recalling that $G_0$ is Haar homeomorphic to $M\times H$, we obtain  the chain of linear isometries
\[\ell^\infty\left(\alpha,L^\infty(G_0)\right)
\longrightarrow
\ell^\infty \left(\alpha, L^\infty(H)\right)
\longrightarrow
\frac{\dsp  \ell^\infty\left(\alpha,L^\infty(G_0)\right)}{\ell^\infty\left(\alpha,\CB(G_0)\right)}.\]

In view of the identifications made by the isometries $\mathbf{\Psi_1}$ and $\mathbf{\Psi_7}$ deduced from \eqref{isom0} and \eqref{isom},  this implies the existence of a linear isometry from  $L^\infty(G)$ into ${\dsp\frac{ L^\infty(G)}{\CB(G)} }$.
\end{proof}


\section{Seven isometries and a lemma}\label{7Isom}

So  far four out of the seven required isometries have been produced. This section points where to look for  the rest, namely, $\mathbf\Psi_2$, $\mathbf\Psi_4$ and $\mathbf\Psi_6,$ and so we reach in Lemma \ref{lem:cond} the main key lemma in the paper.

The previous sections, ending with Corollary \ref{prodenar}, show  that Theorems A and B hold in particular for
every locally compact group which contains a compact open  $K$ of the form $M\times H$ with $M$ metrizable, non-discrete and $\sigma$-compact.
In  this section, we prove that if $K$ can be sandwiched between two such groups
$M_1\times H_1$ and $M_2\times H_2$ via two continuous, open surjective homomorphisms $\varphi_1$ and $\varphi_2$
with $L^1(H_1)$ and $L^1(H_2)$ being isometrically isomorphic (showing the need of $\mathbf\Psi_4$), then  Theorems A and B hold as well.
 With the help of   Lemma \ref{chou}, the homomorphisms $\varphi_1$ and $\varphi_2$ will induce the isometries $\mathbf\Psi_6$ and $\mathbf\Psi_2,$  respectively.

The second part of our preparation starts with  Theorem \ref{Psi4}, and  identifies the $L^1$-spaces of products of uncountable,
separable, atomless, metric spaces. This theorem will show in the final section the availability of isometry
$\mathbf\Psi_4$.

  \medskip
We begin with a lemma that isolates a usual tool related to  the Weil formula, essential in the construction of Haar measure on quotients (see, for instance Theorem 2.56 of \cite{Fo} or Remark 3.4.1 of \cite{reitsteg}).
\begin{lemma}\label{weil}
Let $\varphi \colon K\to H$ be a continuous and open homomorphism of locally compact groups with compact kernel $N$. Then every $g\in L^\infty(K)$ induces a function $g^N\in L^\infty(H)$ with the following properties:
\begin{enumerate}
  \item $\|g^N\|\leq \|g\|$.
  \item $(f\circ \varphi)^N=f$ for every $f\in L^\infty(H)$.
  \item If $g\in \CB(K)$, then $g^N\in \CB(H)$.
\end{enumerate}
\end{lemma}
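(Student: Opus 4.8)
The plan is to realize $g^N$ as the fibrewise average of $g$ over the compact cosets of $N$, using that $N$ is normal so that $\varphi$ factors through a topological isomorphism $K/N\cong H$ (the openness and surjectivity of $\varphi$ make the induced bijection $K/N\to H$ a homeomorphism). Normalize the Haar measure $\lambda_N$ of the compact group $N$ so that $\lambda_N(N)=1$, and for $g\in L^\infty(K)$ set
\[
\widehat g(x)=\int_N g(xn)\,d\lambda_N(n).
\]
First I would check that $\widehat g$ is well defined as an element of $L^\infty(K)$: since $N$ is compact and $g$ is essentially bounded, the slice $n\mapsto g(xn)$ is integrable for locally almost every $x$ and $|\widehat g(x)|\le \|g\|$, while the Weil formula relating $\lambda_K$ to $\lambda_N$ and the Haar measure of $K/N$ (Theorem 2.56 of \cite{Fo}) supplies both the local measurability of $\widehat g$ and the fact that replacing $g$ by a function equal to it locally almost everywhere changes $\widehat g$ only on a locally null set. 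The right invariance of $\lambda_N$ gives $\widehat g(xn_0)=\widehat g(x)$ for every $n_0\in N$, so $\widehat g$ is constant along the fibres of $\varphi$ and therefore descends to a function $g^N$ on $K/N$, which I identify with $H$ through the isomorphism induced by $\varphi$.

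Once the construction is in place, the three properties follow directly. Property (i) is immediate from $|\widehat g(x)|\le\int_N|g(xn)|\,d\lambda_N(n)\le\|g\|\,\lambda_N(N)=\|g\|$. For property (ii), if $g=f\circ\varphi$ with $f\in L^\infty(H)$, then for $n\in N=\ker\varphi$ one has $g(xn)=f(\varphi(x)\varphi(n))=f(\varphi(x))$, so $\widehat g(x)=f(\varphi(x))$ and hence $g^N=f$ under the identification $K/N\cong H$. For property (iii), when $g\in\CB(K)$ the integrand $n\mapsto g(xn)$ depends continuously and uniformly on the compact set $N$, so $\widehat g$ is bounded and continuous on $K$; being $N$-invariant it descends to a bounded function on $K/N$, and since the quotient map $K\to K/N$ is open, continuity on the quotient is equivalent to continuity of its pullback $\widehat g$ on $K$, giving $g^N\in\CB(H)$.

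The main obstacle is entirely in the $L^\infty$ bookkeeping of the first step, not in the formal verifications. What must be confirmed is that the fibring $x\mapsto xN$ sends locally null subsets of $K$ to locally null subsets of $H$ in the appropriate fibred sense, so that the averaging operator is genuinely defined on equivalence classes and lands in $L^\infty(H)$ rather than on a mere representative. This is precisely the content of Weil's formula combined with the description of locally null sets through compact fibres (as recorded in Theorem 2.56 of \cite{Fo} and Remark 3.4.1 of \cite{reitsteg}); once this is established, the $N$-invariance and the three stated properties are routine.
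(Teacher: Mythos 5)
Your proof is correct and takes essentially the same route as the paper's: both define $g^N$ by averaging $g$ over the cosets of the compact kernel $N$ against the normalized Haar measure $\lambda_N$, use the invariance of $\lambda_N$ to see that the average is constant on fibres and hence descends through the open map $\varphi$, and verify (i)--(iii) by the same direct computations (the paper obtains the continuity in (iii) via a vector-valued argument on relatively compact open sets rather than uniform continuity of translation over the compact set $N$, a cosmetic difference). Your explicit attention to the fact that the averaging operator is well defined on locally-almost-everywhere equivalence classes, via the Weil formula, addresses a point the paper dismisses with ``it is easy to see.''
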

\begin{proof}
 Let $\lambda_N$ be the normalized Haar measure on $N$.
For $g\in L^\infty(K)$, we define a function $N^g$ on $K$ by \[N^g(x)=\int_N g(xn)d\lambda_N(n)\quad\text{for }\quad x\in K.\]

It is easy to see that $N^g\in L^\infty(H).$  Due to the invariance of $\lambda_N$ on $N$,  $N^g$ is constant on each coset of $N.$ It follows that $N^g$  defines a function on $H$,  we define a function $g^N$ on $H$ by setting
$g^N(\varphi(x))=N^g(x)$. Clearly, $\|g^N\|\leq \|g\|$.

Let now $f\in L^\infty(H)$, we check then that  $(f\circ \varphi)^N=f.$ Let $y\in H$ and pick $x\in K$ with $\varphi(x)=y.$
Then \begin{align*}(f\circ\varphi)^N(y)&=N^{(f\circ\varphi)}(x)=\int_N f\left(\varphi(xn)\right)d\lambda(n)\\&
=\int_N f(\varphi(x)\varphi(n))d\lambda(n)=\int_N f(\varphi(x))d\lambda(n)=f(\varphi(x))=f(y),\end{align*}
as required.

Let finally $g\in \CB(K)$. We first check that  $N^g\in \CB(K)$. To see this, it is enough to check that the function is continuous on every relatively compact open set.
So let $U$ be such a set, and for each $x\in \overline U$, let $F(x)\in \CB(N)$
be given by $F(x)(n)=g(xn)$. Then by \cite[Lemma A.9]{BJM}, the function \[x\mapsto \mu(F(x)): \overline U\to \mathbb C\]
 is continuous
for each $\mu\in \CB(N)^*.$ In particular, the function  $N^g$ is continuous on  $U$, as required.

Since $\varphi$ is continuous and open,  $g^N$ must also be continuous. To see this let  $\epsilon>0$, $y\in H$ and $x\in K$ such that $\varphi(x)=y$ be arbitrarily chosen.
 Take  a neighbourhood $U$  of $x$ in $K$ such that $|N^g(u)-N^g(x)|<\epsilon$
for every $u\in U.$ Since $\varphi$ is open, we may take $V=\varphi(U)$ as a neighbourhood of $y$
in $H$ and  $|g^N(z)-g^N(y)|<\epsilon$ for every $z\in V.$ Thus, $g^N\in\CB(H)$.\end{proof}


\begin{lemma} \label{chou}
Let $H$ and $K$ be locally compact groups and suppose that
there exists a continuous, open and  surjective, homomorphism $\varphi:K\to H$ with compact kernel. Let in addition $\alpha$ be a cardinal number. The natural maps induced by $\varphi$:
\begin{align*}
  \Phi&:L^\infty(H)\to L^\infty(K),\\
\Phi_6&:\ell^\infty(\alpha,L^\infty(H)\to\ell^\infty(\alpha,L^\infty(K)) \mbox{ and }\\ \mathbf\Psi_6&:\frac{\ell^\infty(\alpha,L^\infty(H))}{\ell^\infty(\alpha,\CB(H))}\to\frac{\ell^\infty(\alpha, L^\infty(K))}{\ell^\infty(\alpha,\CB(K))}
\end{align*}
are all linear isometries.
 \end{lemma}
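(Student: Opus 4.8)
The plan is to take $\Phi$ to be the pullback $\Phi(f)=f\circ\varphi$, to derive its isometry property directly from Lemma \ref{weil}, to obtain $\Phi_6$ by applying $\Phi$ coordinatewise, and finally to produce $\mathbf\Psi_6$ as the quotient isometry furnished by Lemma \ref{lem:quotiso}.

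First I would treat $\Phi$. Linearity is clear, and $f\circ\varphi$ is a well-defined element of $L^\infty(K)$ by the Weil formula; indeed this is already implicit in Lemma \ref{weil}, where the expression $(f\circ\varphi)^N$ is read as built from $f\circ\varphi\in L^\infty(K)$. The inequality $\|f\circ\varphi\|\le\|f\|$ is immediate since $\varphi$ takes its values in $H$. For the reverse inequality I would invoke Lemma \ref{weil}: combining property (1), $\|g^N\|\le\|g\|$, with property (2), $(f\circ\varphi)^N=f$, yields $\|f\|=\|(f\circ\varphi)^N\|\le\|f\circ\varphi\|=\|\Phi(f)\|$. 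Hence $\Phi$ is a linear isometry.

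Next, $\Phi_6$ acts coordinatewise, $\Phi_6\bigl((\xi_\eta)_{\eta<\alpha}\bigr)=(\Phi(\xi_\eta))_{\eta<\alpha}$. Since $\Phi$ preserves norms, $\|\Phi(\xi_\eta)\|=\|\xi_\eta\|$ for each $\eta$, and taking the supremum over $\eta<\alpha$ gives $\|\Phi_6(\mathbf{\xi})\|=\|\mathbf{\xi}\|$; linearity is inherited from $\Phi$. For $\mathbf\Psi_6$ I would apply Lemma \ref{lem:quotiso} with $E_1=L^\infty(H)$, $E_2=L^\infty(K)$, $T=\Phi$, $F_1=\CB(H)$, $F_2=\CB(K)$ and the cardinal $\alpha$; its conclusion is precisely the desired isometry between the two quotients. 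Condition \eqref{*} holds because $\varphi$ is continuous, so $f\circ\varphi\in\CB(K)$ whenever $f\in\CB(H)$. The essential point is condition \eqref{**}: given $x\in\CB(K)$, I would set $y_x=x^N$, which lies in $\CB(H)$ by property (3) of Lemma \ref{weil}. Using that the averaging operator $g\mapsto g^N$ is linear (being defined by integration over $N$) together with properties (1) and (2), I obtain, for every $v\in L^\infty(H)$,
\[\|v-x^N\|=\|(v\circ\varphi)^N-x^N\|=\|(v\circ\varphi-x)^N\|\le\|v\circ\varphi-x\|=\|\Phi(v)-x\|,\]
which is exactly \eqref{**}.

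The main obstacle, and the reason the shortcut $T(F_1)=F_2$ noted after Lemma \ref{lem:quotiso} is unavailable, is that $\Phi(\CB(H))$ is only the space of $N$-invariant continuous functions on $K$, a proper subspace of $\CB(K)$ in general. Thus \eqref{**} must be verified by hand, and the crucial insight is to use the averaging operator $g\mapsto g^N$ as a norm-decreasing left inverse of the pullback: properties (1) and (2) of Lemma \ref{weil} are exactly what make the displayed estimate go through.
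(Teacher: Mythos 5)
Your proposal is correct and follows essentially the same route as the paper: pullback $\Phi(f)=f\circ\varphi$, coordinatewise extension to $\Phi_6$, and Lemma \ref{lem:quotiso} for $\mathbf\Psi_6$, with condition \eqref{**} verified by taking $y_x=x^N$ and using the linearity and norm-decreasing property of the averaging operator from Lemma \ref{weil}. The only cosmetic difference is that the paper obtains the isometry of $\Phi$ from the fact that $A\subset H$ is locally null iff $\varphi^{-1}(A)$ is (via \cite[Theorem 2.64]{Fo}), whereas you derive the lower bound $\|f\|\le\|f\circ\varphi\|$ from properties (1) and (2) of Lemma \ref{weil}; both are valid.
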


 \begin{proof} We notice  first
 that, putting  $N=\ker\varphi$, $H$ is topologically isomorphic to $G/N$.
  We may then apply \cite[Theorem 2.64]{Fo} to see that a subset $A\subset H$ is locally null if and only if $\varphi^{-1}(A)$ is locally null.
 It follows easily that the function $\Phi$, defined by $\Phi(f)=f\circ \varphi$ is a well-defined isometry.

We now apply Lemma \ref{lem:quotiso} to the linear isometry $\Phi$. Since $\varphi$ is continuous $\Phi(\CB(H))\subset \CB(K)$.  To check property \eqref{**} of Lemma \ref{lem:quotiso}, we choose for every  $g\in \CB(K)$, the function $g^N\in \CB(H)$ constructed in Lemma \ref{weil}. Applying Lemma \ref{weil},
 \begin{align*}\|\Phi(f)-g\|\ge \|(\Phi(f)-g)^N\|&=
\|(\Phi(f))^N-g^N\|\\&=\|f-g^N\|\quad\text{for every}\quad f\in L^\infty(H).\end{align*}
Two  applications of Lemma \ref{lem:quotiso}, the first to the subspaces $F_1=F_2=\{0\}$ and the second to    the subspaces $F_1=\CB(H)$ and $F_2=\CB(K)$, show that both $\Phi_6$ and
$\mathbf{\Psi_6}$ are linear isometries.
\end{proof}

The material which follows  may be known to the experts in measure theory, but we have found no precise reference in the literature. Since the material is needed for the construction of the fourth isometry,  it is collected here for completeness.

We start with \cite[page 407]{R}, and recall that two measure spaces $(X, \mathcal A,\mu)$
and $(Y, \mathcal B,\nu)$ are isomorphic when there there is a bijection $\varphi:X\to Y$ such that \begin{align*}\varphi(A)\in \mathcal B\;\text{ and}\; \nu(\varphi(A))&=\mu(A)\quad\text{for every}\quad A\in\mathcal A\\
\varphi^{-1}(B)\in \mathcal A\;\text{ and}\; \mu(\varphi^{-1}(B))&=\nu(B)\quad\text{for every}\quad B\in\mathcal B\end{align*}

\begin{theorem} \label{Royden}
\cite[Chapter 15, Theorem 16]{R}
Let $X$ be an uncountable, complete, separable metric space and $\mu$ be an atomless, positive, finite, Borel measure on $X.$
Then the measure space $(X,\mu)$ is isomorphic to the standard measure space $(\I, \lambda),$ where  $\I$ is the closed unit  interval and $\lambda$ is the  Lebesgue measure.
\end{theorem}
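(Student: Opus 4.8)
The statement is a classical theorem whose proof I would organize around reducing the general uncountable complete separable metric space to a canonical model, and then matching that model with $(\I,\lambda)$ by a point-rearrangement argument. The plan is to proceed in three stages: first normalize, then build an isomorphism with a standard product/Cantor model, and finally transfer that isomorphism to the unit interval.

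First I would reduce to the case of a probability measure. Since $\mu$ is finite and positive, set $c=\mu(X)>0$ and replace $\mu$ by $c^{-1}\mu$; any isomorphism for the normalized measure, composed with the scaling $t\mapsto ct$ on $\I$ (which carries $c^{-1}\lambda$ on a suitably scaled interval to $\lambda$), recovers the original statement. So from now on I assume $\mu(X)=1$. The heart of the matter is to produce the bijection $\varphi\colon X\to \I$ together with its measurability and measure-preservation in both directions, as spelled out in the displayed definition of isomorphic measure spaces preceding the theorem.

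The main construction uses the fact that a complete separable metric space is a \emph{standard Borel space}, and that its Borel structure, carrying an atomless Borel probability measure, is isomorphic to $([0,1],\mathcal{B},\lambda)$ as a \emph{measure algebra}. Concretely, I would fix a countable base $\{B_n\}$ for the topology of $X$ and use it to define a Borel-measurable injection of $X$ into the Cantor cube $\{0,1\}^{\N}$ via $x\mapsto(\chi_{B_n}(x))_n$; by a standard fact this map is a Borel isomorphism onto its (Borel) image, so $(X,\mathcal{A},\mu)$ is Borel isomorphic to a subset of $\{0,1\}^{\N}$ carrying the pushforward measure. The atomlessness of $\mu$ is what guarantees that no single point carries mass, and one builds, by splitting $\I$ successively at the dyadic levels to match the measures of the cylinder sets, a measure-preserving map. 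The cleanest route is to invoke the theorem that any two atomless separable probability measure algebras are isomorphic, with $([0,1],\lambda)$ as the canonical representative; then one lifts this measure-algebra isomorphism to a genuine point isomorphism by the standard ``removing a null set'' device, redefining the maps on a pair of Borel null sets (which have the same cardinality $\cc$) by an arbitrary bijection so that $\varphi$ becomes a true bijection satisfying both measure-preservation conditions.

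The step I expect to be the main obstacle is passing from an isomorphism of \emph{measure algebras} (equivalence classes of sets modulo null sets) to an honest \emph{point} bijection $\varphi\colon X\to\I$ that is bimeasurable and measure-preserving on the nose. The algebra-level isomorphism is comparatively soft and follows from the classification of separable atomless measure algebras; the delicate bookkeeping is in choosing Borel null sets $X_0\subset X$ and $\I_0\subset\I$ off of which a Borel isomorphism is already defined, verifying $\card{X_0}=\card{\I_0}=\cc$ so that an arbitrary bijection patches them, and checking that the resulting global map still satisfies $\varphi(A)\in\mathcal{B}$ with $\lambda(\varphi(A))=\mu(A)$ and the symmetric condition for $\varphi^{-1}$. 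Since the conclusion is stated as a citation to \cite[Chapter 15, Theorem 16]{R}, I would in practice simply appeal to that reference; the sketch above indicates the route one would take to reconstruct it.
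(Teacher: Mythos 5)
The paper does not prove this statement at all: it is imported verbatim from Royden \cite[Chapter 15, Theorem 16]{R} and used as a black box, so there is no internal argument to compare yours against. Judged on its own, your sketch follows a legitimate and standard route --- normalize to a probability measure, Borel-embed $X$ into the Cantor cube via a countable base, invoke the classification of separable atomless measure algebras, and then lift the algebra isomorphism to a point bijection by discarding and re-bijecting a pair of null sets of cardinality $\cc$. You correctly identify the genuinely delicate step, namely passing from a measure-algebra isomorphism to an honest bimeasurable measure-preserving bijection, and you correctly note that atomlessness is what makes the matching of masses possible. But as written this is a plan rather than a proof: the lifting step is named and then deferred (``I would in practice simply appeal to that reference''), and that step is exactly where the content of the theorem lives, so nothing has actually been established beyond the citation the paper already makes. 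It is also worth pointing out that the classical proof (the one in Royden) is more elementary than your measure-algebra route: one first uses the Borel isomorphism theorem to reduce to an atomless Borel probability measure $\nu$ on $[0,1]$, and then pushes $\nu$ forward to Lebesgue measure by its distribution function $F(t)=\nu([0,t])$, which is continuous and nondecreasing by atomlessness; the failure of injectivity of $F$ occurs only on countably many intervals of $\nu$-measure zero, which are then absorbed into the null-set surgery. That route avoids the measure-algebra machinery and the lifting theorem entirely, at the cost of being specific to the one-dimensional target. Either way, if you intend to rely on the result you should, like the paper, simply cite it; if you intend to prove it, the lifting (or the distribution-function) step must actually be carried out.
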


Let $(X_1,\mu_1),\ldots,(X_n,\mu_n)$ be  atomless, positive, finite, Borel measure spaces on uncountable complete separable metric spaces. Then  the isomorphisms \[\varphi_i:\I\to X_i \quad (i=1,\ldots, n)\] given by Theorem \ref{Royden}
induce the following linear isometry and yield immediately the corollary below.
\begin{align*}T\colon L^1\left(\prod_{i=1}^n X_i,\bigotimes_{i=1}^n \mu_i\right)&\to
   L^1\left(\I^n,\bigotimes_{i=1}^n\lambda\right)\\ f&\longmapsto T(f),\end{align*}
   where $T(f)$ is given by
\[T(f)(t_1,\ldots, t_n)= f(\varphi_1(t_1),\ldots, \varphi_n(t_n))\quad\text{for}\quad (t_1,\ldots, t_n)\in\I^n.\]

\begin{corollary}\label{finite}
  Let $(X_1,\mu_1),\ldots,(X_n,\mu_n)$ be  atomless, positive, finite Borel  measures on uncountable, complete,  separable  metric spaces. The map $T$ is a linear surjective isometry.
    \end{corollary}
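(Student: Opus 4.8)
The plan is to reduce the whole statement to the single fact that the product map
$\Phi=\varphi_1\times\cdots\times\varphi_n$, given by
$\Phi(t_1,\dots,t_n)=(\varphi_1(t_1),\dots,\varphi_n(t_n))$, is an isomorphism of the
product measure spaces $\bigl(\I^n,\bigotimes_{i=1}^n\lambda\bigr)$ and
$\bigl(\prod_{i=1}^n X_i,\bigotimes_{i=1}^n\mu_i\bigr)$ in the sense recalled just before
Theorem \ref{Royden}. Granting this, $T$ is precisely the composition operator
$f\mapsto f\circ\Phi$, and everything else is formal. Linearity is immediate. For the
isometry, the change-of-variables formula for a measure isomorphism gives, for every
$f\in L^1\bigl(\prod X_i,\bigotimes\mu_i\bigr)$,
\[\|T(f)\|_{L^1(\I^n)}=\int_{\I^n}|f\circ\Phi|\,d\Bigl(\bigotimes\lambda\Bigr)
=\int_{\prod X_i}|f|\,d\Bigl(\bigotimes\mu_i\Bigr)=\|f\|_{L^1(\prod X_i)}.\]
Surjectivity is equally direct: since the inverse of a measure isomorphism is again one,
given $g\in L^1\bigl(\I^n,\bigotimes\lambda\bigr)$ the function $g\circ\Phi^{-1}$ lies in
$L^1\bigl(\prod X_i,\bigotimes\mu_i\bigr)$ and $T(g\circ\Phi^{-1})=g$.

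It thus remains to verify that $\Phi$ is a measure isomorphism, using the individual
isomorphisms $\varphi_i\colon\I\to X_i$ of Theorem \ref{Royden} as black boxes. Being a
product of bijections, $\Phi$ is a bijection. For measurability and measure-preservation I
would argue on measurable rectangles: $\Phi(A_1\times\cdots\times A_n)
=\varphi_1(A_1)\times\cdots\times\varphi_n(A_n)$, each factor $\varphi_i(A_i)$ is
$\mu_i$-measurable with $\mu_i(\varphi_i(A_i))=\lambda(A_i)$, so the image is a measurable
rectangle and
\[\Bigl(\bigotimes\mu_i\Bigr)\bigl(\Phi(A_1\times\cdots\times A_n)\bigr)
=\prod_{i=1}^n\mu_i(\varphi_i(A_i))
=\prod_{i=1}^n\lambda(A_i)
=\Bigl(\bigotimes\lambda\Bigr)(A_1\times\cdots\times A_n).\]
Because a bijection commutes with complements and countable unions, the image under $\Phi$
of the $\sigma$-algebra generated by the rectangles of $\I^n$ is exactly the
$\sigma$-algebra generated by the rectangles of $\prod X_i$, which yields the required
correspondence of measurable sets. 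Moreover the two finite measures
$\Phi_*\bigl(\bigotimes\lambda\bigr)$ and $\bigotimes\mu_i$ agree on the rectangles, a
$\pi$-system generating the product $\sigma$-algebra, hence they coincide by the
uniqueness theorem for product measures. The same reasoning applied to the maps
$\varphi_i^{-1}$ shows that $\Phi^{-1}$ is measurable and measure-preserving, completing
the proof that $\Phi$ is an isomorphism.

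The only genuinely delicate point is this passage from rectangles to the full product
$\sigma$-algebra, compounded by the fact that the isomorphisms of Theorem \ref{Royden} are
stated for the \emph{completed} (Lebesgue) measure spaces, whereas a product
$\sigma$-algebra need not be complete. This is harmless here: since passing to the
completion of a measure changes neither the space $L^1$ nor the value of any integral, one
may work throughout with the products of the given $\sigma$-algebras, where the rectangle
argument above applies verbatim, and only at the end identify the resulting $L^1$ spaces
with those of the completions. With that understood, the extraction of the norm identity
and of surjectivity from the change-of-variables formula is routine, and the corollary
follows.
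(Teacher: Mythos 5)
Your proposal is correct and follows exactly the route the paper intends: the paper offers no written proof, simply asserting that the corollary is "immediate" from the Royden isomorphisms $\varphi_i\colon\I\to X_i$ via the composition operator $T(f)=f\circ(\varphi_1\times\cdots\times\varphi_n)$, which is precisely what you verify. Your added care about rectangles generating the product $\sigma$-algebra and about completions is a legitimate filling-in of the details the authors left implicit, not a different argument.
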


  If $\{X_i\}_{i\in I} $ is a family of non-empty sets and $F
   =\{i_1,\ldots,i_n\}$
   is a finite subset of $I$,
   we say that a function $f:\prod_{i \in I}X_i\to \C$ {\it depends on $F$}
if $f(\mathbf{x})=f(\mathbf{y})$ whenever $\mathbf{x}$ and $\mathbf{y}$ coincide on the coordinates 
   in $F$.

  \medskip

  When $f:\prod_{i \in I}X_i\to \C$ depends on some finite subset $F$ of $I,$ we may  define $f_F$ on $(X_{i_1}\times\cdots \times X_{i_n})$ as \[f_F(x_1,\ldots,x_n)=f(x_1,\ldots,x_n,\mathbf{0}), \]
  where $(x_1,\ldots,x_n,\mathbf{0})$ stands for the element of $\prod_i X_i$ with $0$ in every coordinate distinct from $\{i_1,\ldots,i_n\}$ and $x_j$ in the $i_j$-coordinate.

When the family $\{(X_i,\mu_i)\}_{i\in I}$ is made of measure spaces with  $\mu(X_i)=1$, for every $i\in I$,
we consider the infinite product measure space $X=\prod_{i\in I}X_i$ with the infinite product measure
$\mu=\bigotimes_{i\in I}\mu_i$ (as constructed  for instance in Section 28  of \cite{Ha}). If each $X_i$ is a compact group and $\mu_i$ is  its normalized Haar measure, then  $\bigotimes_{i\in I}\mu_i$ is again the normalized Haar measure of  the compact group $X=\prod_{i\in I}X_i$.

 The following  discussion leading to Lemma \ref{prod} follows the path of \cite[Page 42]{Fo}. For each finite subset $F$ of $I,$ we define
$L^1_F(X,\mu)$ as the space of all functions $f$ in $L^1(X,\mu)$
which depend on $F$ and such that $f_F\in L^1\left(\prod_{i\in F}X_i,\otimes_{i\in F}\mu_i\right).$
To simplify the notation  we shall not write the measures explicitly.
Thus,
\[L^1_F(X)=\left\{f\in L^1(X)\colon \txt{ $f=g$ (a.e) for some $g$  that depends\\ on $ F$ and $g_F\in L^1(\prod_{i\in F}X_i)$}\right\}.\]

  With $\mathcal F$ denoting the family of all finite subsets of $I$, we also define
  \begin{align*} L^1_{\mathcal F}\left(X\right)&=\bigcup_{F\in\mathcal F}
L^1_F(X)  \\&=\left\{f\in L^1\left(X\right)\colon  \txt{$f$
is equal a.e. to a function that  \\  depends
    on finitely many coordinates in $I$}\right\}.\end{align*}
  Then we have the following lemma.
%
%
%
%
%
%

\begin{lemma}
  \label{prod}
  Let $F\in \mathcal F$ and $\pi_F^X \colon  X \to \prod_{i\in F} X_i$ be the projection.
  \begin{enumerate}
\item   If $f\in  L^1_{F}(X)$, then
  $ \|f\|_1=\|f_F\|_1.$
 \item If $h\in  L^1\left(\prod_{i\in F} X_i\right)$, then ${\displaystyle h\circ\pi_F^X \in L^1_F\left(X\right)}$ and
  $ \| h\circ \pi_F^X \|_1=\|h\|_1.$
  \end{enumerate}
\end{lemma}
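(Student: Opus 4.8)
The plan is to reduce both statements to a single measure-theoretic fact: the coordinate projection $\pi_F^X \colon X \to \prod_{i\in F}X_i$ transports the infinite product measure $\mu$ onto the finite product measure $\mu_F:=\bigotimes_{i\in F}\mu_i$, that is, $(\pi_F^X)_*\mu=\mu_F$. First I would verify this on the generating cylinder sets: for a measurable $A\subset \prod_{i\in F}X_i$ one has $(\pi_F^X)^{-1}(A)=A\times\prod_{i\notin F}X_i$, and by the very construction of the product measure (Section~28 of \cite{Ha}) $\mu\bigl(A\times\prod_{i\notin F}X_i\bigr)=\mu_F(A)\cdot\bigl(\bigotimes_{i\notin F}\mu_i\bigr)\bigl(\prod_{i\notin F}X_i\bigr)=\mu_F(A)$, the tail factor contributing $1$ precisely because every $\mu_i$ is a probability measure. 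Since the measurable rectangles generate the product $\sigma$-algebra and form a $\pi$-system containing the whole space, the two probability measures $(\pi_F^X)_*\mu$ and $\mu_F$ agreeing on them forces their equality by the standard uniqueness theorem. This is exactly where the normalization $\mu_i(X_i)=1$ is used, and the reason the preceding discussion restricts to probability spaces.

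Granting this, part (1) is immediate from the change-of-variables formula for pushforward measures. If $f\in L^1_F(X)$, then after discarding a null set $f$ depends only on the coordinates in $F$, so $f=f_F\circ \pi_F^X$ (a.e.), where $f_F$ is the induced function on $\prod_{i\in F}X_i$. Applying the change of variables to the measurable function $|f_F|$ gives
\[\|f\|_1=\int_X |f_F\circ\pi_F^X|\,d\mu=\int_{\prod_{i\in F}X_i}|f_F|\,d\bigl((\pi_F^X)_*\mu\bigr)=\int_{\prod_{i\in F}X_i}|f_F|\,d\mu_F=\|f_F\|_1.\]

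For part (2), I would set $g=h\circ \pi_F^X$. By construction $g$ depends only on the coordinates in $F$, and evaluating at the distinguished point shows $g_F=h$, so $g_F\in L^1(\prod_{i\in F}X_i)$. The same change-of-variables computation, now applied to $|h|$, yields $\|g\|_1=\int_X|h\circ\pi_F^X|\,d\mu=\int_{\prod_{i\in F}X_i}|h|\,d\mu_F=\|h\|_1<\infty$; in particular $g\in L^1(X)$, whence $g=h\circ\pi_F^X\in L^1_F(X)$ with $\|h\circ\pi_F^X\|_1=\|h\|_1$. The only genuine obstacle is the identification $(\pi_F^X)_*\mu=\mu_F$ of the first paragraph; once it is in place everything else is a direct application of the pushforward change-of-variables formula, and the measurability and a.e.\ bookkeeping implicit in the definition of $L^1_F(X)$ causes no difficulty.
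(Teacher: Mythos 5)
Your proof is correct. Note that the paper states Lemma \ref{prod} without any proof, leaving it as a consequence of the construction of the infinite product measure (the preceding discussion cites Section 28 of \cite{Ha} and page 42 of \cite{Fo}), so there is no argument in the paper to compare against; your route via the pushforward identity $(\pi_F^X)_*\mu=\bigotimes_{i\in F}\mu_i$, verified on measurable rectangles and extended by the uniqueness theorem for measures agreeing on a generating $\pi$-system, is precisely the intended justification, and both parts then follow from the change-of-variables formula exactly as you write. You also correctly isolate the one point where a hypothesis is genuinely used, namely the normalization $\mu_i(X_i)=1$, without which the tail factor in $\mu\bigl(A\times\prod_{i\notin F}X_i\bigr)$ would not equal $1$ and the norm identities would fail.
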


  \begin{theorem} \label{Psi4} Let $\{(X_i,\mu_i)\}_{i\in I}$ be a family of measure spaces, where each $X_i$
  is an uncountable complete separable metric space and each $\mu_i$ is an atomless, positive, finite, Borel measure measure on $X_i.$
  Then there is a linear isometry
 \[ \Psi \colon L^1\left(\prod_{i\in I}X_i \right)\to  L^1\left(\I^I \right),\]
 where $\I$ is the unit interval equipped with Lebesgue measure $\lambda$.
\end{theorem}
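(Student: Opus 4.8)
The plan is to build $\Psi$ first on the dense subspace $L^1_{\mathcal F}(X)$ of functions depending on finitely many coordinates and then extend it by continuity. Since the infinite product measure $\bigotimes_{i\in I}\mu_i$ appearing in the statement is defined only when each $\mu_i$ is a probability measure (see the discussion preceding Lemma \ref{prod}), which is the only situation in which we apply the theorem, I assume each $\mu_i(X_i)=1$. By Theorem \ref{Royden} I then fix, once and for all and for each $i\in I$, a measure isomorphism $\varphi_i\colon \I\to X_i$. Write $X=\prod_{i\in I}X_i$; the target $\I^I$ is the product of the probability spaces $(\I,\lambda)$.

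For each finite $F\subseteq I$ I would define $\Psi_F\colon L^1_F(X)\to L^1_F(\I^I)$ as the composition of three isometries
\[ L^1_F(X)\xrightarrow{\ f\mapsto f_F\ } L^1\Bigl(\prod_{i\in F}X_i\Bigr)\xrightarrow{\ T_F\ } L^1(\I^F)\xrightarrow{\ h\mapsto h\circ\pi_F^{\I^I}\ } L^1_F(\I^I).\]
Here the first arrow is the surjective isometry of Lemma \ref{prod} (its inverse being $h\mapsto h\circ \pi_F^X$, by part (ii)), $T_F$ is the surjective isometry of Corollary \ref{finite} associated with the isomorphisms $\{\varphi_i\}_{i\in F}$, and the last arrow is again Lemma \ref{prod}(ii), now applied to the product $\I^I$. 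Being a composition of isometries, each $\Psi_F$ is a linear isometry, and in fact maps onto $L^1_F(\I^I)$.

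The heart of the argument is the compatibility of the family $\{\Psi_F\}$: if $F\subseteq F'$ then $\Psi_{F'}$ restricts to $\Psi_F$ on $L^1_F(X)\subseteq L^1_{F'}(X)$. Indeed, for $f\in L^1_F(X)$ one has $f_{F'}=f_F\circ\pi^{F'}_F$, where $\pi^{F'}_F\colon\prod_{i\in F'}X_i\to\prod_{i\in F}X_i$ is the projection, because $f$ ignores the coordinates in $F'\setminus F$ (so the base point used to define $f_{F'}$ is irrelevant). Applying the coordinatewise definition of $T_{F'}$ then gives $T_{F'}(f_{F'})=T_F(f_F)\circ\pi^{F'}_F$, and composing with $\pi_{F'}^{\I^I}$ while using $\pi^{F'}_F\circ\pi_{F'}^{\I^I}=\pi_F^{\I^I}$ yields $\Psi_{F'}(f)=\Psi_F(f)$, as claimed. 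Consequently the maps $\Psi_F$ glue to a single well-defined linear isometry $\Psi_0\colon L^1_{\mathcal F}(X)\to L^1_{\mathcal F}(\I^I)$ on $L^1_{\mathcal F}(X)=\bigcup_{F\in\mathcal F}L^1_F(X)$.

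Finally I would invoke density. As $\bigotimes_{i\in I}\mu_i$ is the extension of the premeasure defined on the algebra of finite cylinder sets (Section~28 of \cite{Ha}), the simple functions built from finite cylinders, which lie in $L^1_{\mathcal F}(X)$, are dense in $L^1(X)$ (this is the approximation argument of \cite[Page 42]{Fo}); the same holds for $L^1_{\mathcal F}(\I^I)$ in $L^1(\I^I)$. Since $\Psi_0$ is an isometry and $L^1(\I^I)$ is complete, $\Psi_0$ extends uniquely to a linear isometry $\Psi\colon L^1(X)\to L^1(\I^I)$ (which is moreover onto, since $\Psi_0$ is onto the dense subspace $L^1_{\mathcal F}(\I^I)$). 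The two steps I expect to be delicate are exactly the coherence check of the third paragraph and the density statement; the three constituent isometries are already furnished by the cited results, so no new norm computation is required.
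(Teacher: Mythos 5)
Your proposal is correct and follows essentially the same route as the paper: restrict to the dense subspace of finitely-dependent functions, use the finite-product isomorphisms from Theorem \ref{Royden} and Corollary \ref{finite} together with Lemma \ref{prod} to get an isometry onto $L^1_{\mathcal F}(\I^I)$, and extend by density. The only presentational difference is that the paper defines the map on $L^1_{\mathcal F}(X)$ by the single global formula $T_0(f)\bigl((t_i)_i\bigr)=f\bigl((\varphi_i(t_i))_i\bigr)$ and then identifies it as $T(f_F)\circ\pi_F$, whereas you define it $F$-by-$F$ and verify the coherence for $F\subseteq F'$ explicitly — a check the paper's global formula renders automatic; your explicit normalization hypothesis $\mu_i(X_i)=1$ is likewise implicit in the paper's use of the infinite product measure.
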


\begin{proof}
The notations introduced in the discussion that precedes this Lemma will be used throughout this proof, so that $\varphi_i \colon \I \to X_i$ will denote the map given by Theorem \ref{Royden} and, for any $F\subset I$ with $|F|=n$,  $T\colon L^1\left(\prod_{i\in F}X_i\right)\to L^1(\I^n)$
will be the isometry introduced immediately afterwards.
To avoid confusion, we denote the projection $\I^I\to \I^{F}$ simply by $\pi_F$.

%

We first define a linear map
\[T_0 \colon L^1_{\mathcal F}\left(\prod_{i\in I}X_i \right)\to  L^1_{\mathcal F}\left(\I^I \right)\] by
\[T_0(f)\Bigl(\left(t_i\right)_{i\in I}\Bigr)= f\Bigl(\left(\varphi_i(t_i)\right)_{i\in I}\Bigr).\]

The properties of $T_0$ are better understood once one realizes that given $f\in L^1_{F}\left(\prod_{i\in I}X_i \right)$,

\[T_0(f)= T(f_F)\circ \pi_F,\]

%
%

With this definition, it is clear that $T_0$  is well-defined (if $f=0$ a.e., then $f_F=0$ a.e.).



%
%

By Lemma \ref{prod} and Corollary \ref{finite}, we have that
 \[ \|f\|_1=\|f_F\|_1=\|T(f_F)\|=\|T(f_F)\circ \pi_F\|,\]
and we see that the map $T_0$  is   an isometry.

Now if $h\in L^1_F\left(\I^I \right)$, then $h_F\in L^1\left(\I^F\right)$. So, by Corollary  \ref{finite}, there is
 $g\in L^1\left(\prod_{i\in F}X_i\right)$ such that $T_F(g)=h_F$.
 Put $f=g\circ \pi_F^X$. Then  $f\in L^1_F(X)$, and  an easy check shows that
  $T_0(f)=h$.
%
%

Therefore  $T_0$ is  a linear isometry  mapping from $L^1_{\mathcal F}\left(\prod_{i\in I}X_i \right)$ onto
 $L^1_{\mathcal F}\left(\prod_{i\in I}\I_i \right)$.
      Since $L^1_{\mathcal F}\left(X\right)$ and $L^1_{\mathcal F}\left(\I^I\right)$  are dense, respectively, in $L^1\left(X\right)$ and  $L^1\left(\I^I\right)$        (see Lemma \ref{dense} below), the claim follows.
\end{proof}

 \begin{lemma}\label{dense} $L^1_{\mathcal F}\left(X\right)$ is dense in $L^1(X)$.
  \end{lemma}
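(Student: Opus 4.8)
The plan is to prove that $L^1_{\mathcal F}(X)$ is dense in $L^1(X)$ by a standard approximation argument adapted to the infinite product measure structure. The final statement, Lemma \ref{dense}, asserts exactly this density, and it is the missing ingredient needed to complete the proof of Theorem \ref{Psi4} just above.

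First I would recall that the infinite product measure $\mu=\bigotimes_{i\in I}\mu_i$ on $X=\prod_{i\in I}X_i$ is constructed (as in Section 28 of \cite{Ha}) so that its underlying $\sigma$-algebra is generated by the \emph{measurable cylinders}, i.e.\ sets of the form $\pi_F^{-1}(B)$ with $F\in\mathcal F$ a finite subset of $I$ and $B$ a measurable subset of $\prod_{i\in F}X_i$. The key structural fact is therefore that the algebra $\mathcal C$ of finite disjoint unions of such cylinders generates the product $\sigma$-algebra, and that $\mu$ restricted to this algebra is $\sigma$-finite (in fact a probability measure, since each $\mu_i(X_i)=1$). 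A characteristic function $\chi_C$ of a cylinder $C=\pi_F^{-1}(B)$ depends only on the coordinates in $F$, so $\chi_C\in L^1_F(X)\subset L^1_{\mathcal F}(X)$; hence every simple function built from cylinder sets lies in $L^1_{\mathcal F}(X)$, and these functions automatically depend on finitely many coordinates.

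The main steps, in order, would be: (1) observe that $L^1_{\mathcal F}(X)$ is a linear subspace of $L^1(X)$, which is immediate since the union of two finite subsets of $I$ is finite, so a sum of two functions depending on finitely many coordinates still depends on finitely many coordinates; (2) invoke the standard measure-theoretic density theorem that simple functions supported on sets of finite measure are dense in $L^1(X)$, reducing the problem to approximating $\chi_E$ for an arbitrary measurable $E\subset X$ with $\mu(E)<\infty$; (3) apply the approximation property of the generating algebra $\mathcal C$: given $\varepsilon>0$, there is a set $C\in\mathcal C$ (a finite union of cylinders, hence depending on finitely many coordinates) with $\mu(E\,\triangle\,C)<\varepsilon$, which yields $\|\chi_E-\chi_C\|_1<\varepsilon$; and (4) conclude that $\chi_C\in L^1_{\mathcal F}(X)$, so every $\chi_E$, and therefore every simple function, lies in the closure of $L^1_{\mathcal F}(X)$, giving the density.

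The step I expect to be the main obstacle is step (3), the approximation of an arbitrary finite-measure set by cylinder sets. This rests on the fact that the algebra generated by the cylinders approximates every set in the completed product $\sigma$-algebra in measure; this is essentially the content of the Carath\'eodory/Hopf extension underlying the construction of the product measure, but one must be slightly careful because $X$ is equipped with the \emph{completed} product measure (the null sets matter, as the definitions of $L^\infty(G)$ and local nullity in the paper emphasize). I would handle this by noting that the outer measure defining $\mu$ is generated by coverings with cylinders, so for $\mu(E)<\infty$ one can cover $E$ up to $\varepsilon$ by countably many cylinders and then truncate to a finite subfamily, obtaining the desired $C\in\mathcal C$. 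Once the approximation in the generating algebra is secured, the remaining assembly into the density of $L^1_{\mathcal F}(X)$ is routine.
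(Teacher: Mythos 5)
Your proof is correct, but it follows a genuinely different route from the paper's. The paper argues topologically: it introduces the space $\CK_{\mathcal F}(X)$ of compactly supported continuous functions depending on finitely many coordinates, notes that it is contained in $L^1_{\mathcal F}(X)$, and invokes Stone--Weierstra\ss{} to get density of $\CK_{\mathcal F}(X)$ in $\CK(X)$ in the uniform norm, hence in $L^1(X)$. You argue measure-theoretically: reduce to characteristic functions of finite-measure sets via density of simple functions, then approximate any such set in measure by a finite union of measurable cylinders, using the Carath\'eodory/Hopf approximation property of the generating algebra underlying the construction of $\bigotimes_{i\in I}\mu_i$; since each cylinder's characteristic function lies in $L^1_{\mathcal F}(X)$, density follows. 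Your version is somewhat longer but more robust: the paper's Stone--Weierstra\ss{} argument really requires $X$ to be compact (if infinitely many factors $X_i$ are non-compact, a nonzero continuous function depending on finitely many coordinates cannot have compact support, so $\CK_{\mathcal F}(X)$ would be trivial), which holds in the paper's application (products of compact metrizable groups) but is not visible in the statement of the lemma; your argument needs only that each $\mu_i$ is a probability measure, exactly the standing hypothesis under which the product measure is constructed. Your step (3) is indeed the only point requiring care, and your handling of it --- covering $E$ up to $\varepsilon$ by countably many cylinders and truncating --- is the standard and correct way to deal with the completion.
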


          \begin{proof} Let $\CK(X)$ be the space of continuous functions on $X$ having a compact support
          and let $\CK_{\mathcal F}(X)$ be the space of functions in $\CK(X)$ which depend on some finite subset $F$ of $I.$ Since $L^1_{\mathcal F}\left(X\right)$ contains  $\CK_{\mathcal F}(X)$ which is dense in $\CK(X)$ (Stone-Weierstra\ss) and the latter is dense in $L^1(X)$, the claim follows.
\end{proof}

We next  isolate a set of  conditions that go in the spirit of Corollaries  \ref{cor:mainvv} and \ref{prodenar} and suffice to guarantee a linear isometric copy of $L^\infty(G)$ in the quotient space ${\dsp\frac{L^\infty(G)}{\CB(G)}}$, and so the extreme non-Arens regularity of the group algebra.

The mappings $\varphi_1$ and $\varphi_2$ of Lemma \ref{lem:cond}, that  will be explicitly produced in Theorem \ref{compenar}, induce the rest of the seven isometries, namely
$\mathbf\Psi_6$ and $\mathbf\Psi_2$.

\begin{lemma}
  \label{lem:cond}
Let $K$ be a locally compact group and suppose that four locally compact $\sigma$-compact groups $H_1,\:H_2,\:M_1$ and $M_2$ can be found such that:
 \begin{enumerate}
     \item $M_1$ and $M_2$ are metrizable, $\sigma-$compact and non-discrete,
     \item There is a surjective linear isometry $\Psi\colon L^\infty(H_2)\longrightarrow L^\infty(H_1)$.
         \item There exist   two  continuous and  open surjective homomorphisms\[  \xymatrix{M_2\times H_2 \ar[r]^{ \txt{\;\;$\displaystyle \varphi_2$}}& K\ar[r]^{\txt{$\displaystyle \varphi_1\;\;$}} & M_1\times H_1}\]
         such that $\ker(\varphi_1)$ is compact and
$\varphi_2( \lambda_{M_2}\otimes
\lambda_{H_2})=\lambda_{K}$ and $\varphi_1(\lambda_{K})=\lambda_{M_1}\otimes \lambda_{H_1}$.
   \end{enumerate}
If $G$ is any locally compact group having  an open subgroup Haar homeomorphic to $K$, then
 there exists a linear isometric copy of $L^\infty(G)$ in the quotient space ${\displaystyle \frac{L^\infty(G)}{\CB(G)}}.$
 In particular, $L^1(G)$ is extremely non-Arens regular.
\end{lemma}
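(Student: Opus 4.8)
The plan is to manufacture a single linear isometry $L^\infty(G)\to\frac{L^\infty(G)}{\CB(G)}$ by threading together seven linear isometries: the four already built in the previous sections ($\mb\Psi_1,\mb\Psi_3,\mb\Psi_5,\mb\Psi_7$) together with the three that hypotheses (i)--(iii) are tailored to supply ($\mb\Psi_2,\mb\Psi_4,\mb\Psi_6$). Once such an isometry $\iota$ is in hand, the proof is complete: its range is a closed subspace $W$ of $\frac{L^\infty(G)}{\CB(G)}$, and since $\wap(G)\subseteq\CB(G)$ the canonical norm-decreasing surjection $p\colon\frac{L^\infty(G)}{\wap(G)}\to\frac{L^\infty(G)}{\CB(G)}$ lets us pull $W$ back to the closed subspace $Y=p^{-1}(W)$ of $\frac{L^\infty(G)}{\wap(G)}$, whereupon $\iota^{-1}\circ p|_Y\colon Y\to L^\infty(G)=L^1(G)^*$ is a continuous linear surjection. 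This is exactly Granirer's condition, and it is the observation the paper has already agreed to use without reference. So the entire content of the proof is the construction and alignment of the chain.

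First I would fix an open subgroup $G_0\le G$ that is Haar homeomorphic to $K$ and set $\alpha=|G:G_0|$. Exactly as in Corollary \ref{prodenar}, a system of coset representatives produces the surjective isometry $T\colon\ell^\infty(\alpha,L^\infty(G_0))\to L^\infty(G)$ whose restriction carries $\ell^\infty(\alpha,\CB(G_0))$ onto $\CB(G)$; its inverse is $\mb\Psi_1$, and via Lemma \ref{lem:quotiso} it descends to the isometric isomorphism $\mb\Psi_7\colon\frac{\ell^\infty(\alpha,L^\infty(G_0))}{\ell^\infty(\alpha,\CB(G_0))}\to\frac{L^\infty(G)}{\CB(G)}$. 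The Haar homeomorphism $G_0\cong K$ then identifies $L^\infty(G_0)$ with $L^\infty(K)$ and $\CB(G_0)$ with $\CB(K)$, so that $\mb\Psi_1$ and $\mb\Psi_7$ become the two outer links, anchored at $\ell^\infty(\alpha,L^\infty(K))$ and $\frac{\ell^\infty(\alpha,L^\infty(K))}{\ell^\infty(\alpha,\CB(K))}$ respectively.

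Next I would assemble the five inner links. From hypothesis (iii), $\varphi_2\colon M_2\times H_2\to K$ is continuous, open, surjective and satisfies $\varphi_2(\lambda_{M_2}\otimes\lambda_{H_2})=\lambda_K$; the first, measure-only part of the proof of Lemma \ref{chou} then shows $f\mapsto f\circ\varphi_2$ is a linear isometry $L^\infty(K)\to L^\infty(M_2\times H_2)$ (here compactness of $\ker\varphi_2$, needed only for the quotient conclusion of Lemma \ref{chou}, plays no role), and coordinatewise this gives $\mb\Psi_2\colon\ell^\infty(\alpha,L^\infty(K))\to\ell^\infty(\alpha,L^\infty(M_2\times H_2))$. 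Lemma \ref{Lintoell}\,(ii) with $M=M_2$, for which $d(L^1(M_2))=\omega\le\alpha$, yields $\mb\Psi_3\colon\ell^\infty(\alpha,L^\infty(M_2\times H_2))\to\ell^\infty(\alpha,L^\infty(H_2))$; hypothesis (ii) applied coordinatewise is $\mb\Psi_4\colon\ell^\infty(\alpha,L^\infty(H_2))\to\ell^\infty(\alpha,L^\infty(H_1))$; Corollary \ref{cor:mainvv} with $M=M_1$, $H=H_1$ is $\mb\Psi_5\colon\ell^\infty(\alpha,L^\infty(H_1))\to\frac{\ell^\infty(\alpha,L^\infty(M_1\times H_1))}{\ell^\infty(\alpha,\CB(M_1\times H_1))}$; and since $\ker\varphi_1$ is compact, the full quotient conclusion of Lemma \ref{chou} applied to $\varphi_1\colon K\to M_1\times H_1$ delivers $\mb\Psi_6\colon\frac{\ell^\infty(\alpha,L^\infty(M_1\times H_1))}{\ell^\infty(\alpha,\CB(M_1\times H_1))}\to\frac{\ell^\infty(\alpha,L^\infty(K))}{\ell^\infty(\alpha,\CB(K))}$. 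The codomain of $\mb\Psi_6$ is precisely the domain of $\mb\Psi_7$, so the composite runs
\[
L^\infty(G)\longrightarrow\ell^\infty\bigl(\alpha,L^\infty(K)\bigr)\longrightarrow\cdots\longrightarrow\frac{\ell^\infty\bigl(\alpha,L^\infty(K)\bigr)}{\ell^\infty\bigl(\alpha,\CB(K)\bigr)}\longrightarrow\frac{L^\infty(G)}{\CB(G)},
\]
and $\iota=\mb\Psi_7\mb\Psi_6\mb\Psi_5\mb\Psi_4\mb\Psi_3\mb\Psi_2\mb\Psi_1$ is the required isometry.

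The substance here is not any new estimate --- all of those are in Theorem \ref{main:constrvv2}, Lemma \ref{Lintoell} and Lemma \ref{chou} --- but the bookkeeping of the index cardinal $\alpha$ through every link and the verification that the two quotient spaces glued at the $\mb\Psi_6$--$\mb\Psi_7$ junction genuinely coincide, which is exactly where the Haar homeomorphism $G_0\cong K$ is spent. I expect the main obstacle to be the small-index case: when $|G:G_0|$ is finite (in particular $\alpha=1$), the packaged forms of $\mb\Psi_3$ and $\mb\Psi_5$ no longer apply verbatim, and one must instead read them off their underlying constructions, namely the first statement of Lemma \ref{Lintoell} and Theorem \ref{main:constrvv2}, which already carry the intrinsic factor $\omega=d(L^1(M_i))$. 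With that reading the intermediate spaces become $\ell^\infty(\omega,L^\infty(H_i))$ and the chain goes through just as in the $\alpha=1$ instance of Corollary \ref{prodenar}; beyond this the argument is pure concatenation.
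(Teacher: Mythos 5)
Your proof is correct and follows essentially the same route as the paper: the same seven isometries $\mathbf\Psi_1,\dots,\mathbf\Psi_7$, assembled in the same order, with $\mathbf\Psi_1,\mathbf\Psi_7$ obtained as in Corollary \ref{prodenar}, $\mathbf\Psi_2,\mathbf\Psi_6$ from Lemma \ref{chou}, $\mathbf\Psi_3$ from Lemma \ref{Lintoell}, $\mathbf\Psi_5$ from Corollary \ref{cor:mainvv}, and $\mathbf\Psi_4$ from hypothesis (ii). Your explicit treatment of the case $\alpha=|G:K|<\omega$ --- where Lemma \ref{Lintoell}(ii) and Corollary \ref{cor:mainvv} do not apply verbatim and one must fall back on the underlying constructions at index $\omega$ --- is in fact more careful than the paper's own proof, which tacitly assumes $\alpha\geq\omega$.
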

\begin{proof} Let  $|G:K|=\alpha$.
As in the proof of Corollary \ref{prodenar}, we have  the isometric isomorphisms proved in \eqref{isom0} and \eqref{isom}
\begin{align*}\mathbf\Psi_1\colon& \xymatrix{ \dsp L^\infty(G)\ar[r]&\dsp \ell^\infty\left(\alpha,L^\infty(K)\right)  \quad\text{and}}\\
\dsp \mathbf\Psi_7\colon&\dsp   \xymatrix{ \dsp \frac{\ell^\infty\left(\alpha,L^\infty(K)\right)}{
\ell^\infty\left(\alpha,\CB(K)\right)}\ar[r]&\dsp\frac{L^\infty(G)}{\CB(G)}}.\end{align*}

 By Lemma \ref{chou} the maps  $\varphi_1$  and $\varphi_2$
induce  the second and  sixth linear isometries:
 \begin{align*}\mathbf\Psi_2\colon& \xymatrix{ \ell^\infty\left( \alpha,L^\infty(K)\right) \ar[r]& \ell^\infty\left( \alpha,L^\infty(M_2\times H_2)\right) \mbox{ and }}\\
%
\mathbf\Psi_6\colon& \xymatrix{
\displaystyle \frac{\ell^\infty\left(\alpha,L^\infty(M_1\times H_1)\right)}{\ell^\infty\left(\alpha,\CB(M_1\times H_1)\right)}\ar[r]&\displaystyle \frac{\ell^\infty\left(\alpha,L^\infty(K)\right)}{\ell^\infty\left(\alpha,\CB(K)\right)}}.\end{align*}


Next, we recall the isometry $\mathbf\Psi_5$ given by  Corollary \ref{cor:mainvv},
\[\mathbf\Psi_5\colon \ell^\infty\left(\alpha,L^\infty(H_1)\right)\longrightarrow \frac{\ell^\infty\left(\alpha, L^\infty(M_1\times H_1)\right)}{\ell^\infty\left(\alpha, \CB(M_1\times H_1)\right)},\]
and the isometry $\mathbf\Psi_3$ given by Lemma \ref{Lintoell} (ii),
\[\mathbf\Psi_3:
\ell^\infty\left( \alpha,L^\infty(M_2\times H_2)\right)
\longrightarrow \ell^\infty\left( \alpha,L^\infty(H_2)\right).\]

Finally, it is clear that the surjective linear isometry $\Psi\colon L^\infty(H_2)\longrightarrow L^\infty(H_1)$ induces a
map \[\mathbf\Psi_4\colon \ell_\infty(\alpha,L^\infty(H_2))\longrightarrow \ell_\infty(\alpha,L^\infty(H_1))\] with the same properties.

Putting everything together, we obtain the following chain of isometries that imply our claim,
and so the extreme non-Arens regularity of $L^1(G)$.

\[\begin{CD}
\dsp \frac{L^\infty(G)}{\CB(G)}@<\mathbf\Psi_7<<
\dsp \frac{\ell^\infty\left(\alpha,L^\infty(K)\right)}{\ell^\infty\left(\alpha, \CB(K)\right)}
@<\mathbf\Psi_6<<
\frac{\dsp \ell^\infty\left(\alpha, L^\infty(M_1\times H_1)\right)}{\dsp \ell^\infty\left(\alpha,\CB(M_1\times H_1)\right)}\\ @. @.
 @A \mathbf\Phi_5 AA   \\ @. @.
  \ell^\infty\left(\alpha,L^\infty(H_1)\right)\\ @. @.
 @A \mathbf\Psi_4 AA   \\ @.
\ell^\infty\left(\alpha,L^\infty(M_2\times H_2)\right)
@>\mathbf\Psi_3>>
\ell^\infty\left(\alpha,L^\infty(H_2)\right)
\\@.
@A\mathbf\Psi_2 AA  \\@.
\dsp \ell^\infty\left(\alpha,L^\infty(K)\right)\\@.
@A\mathbf\Psi_1 AA  \\
@. L^\infty(G).
\end{CD}\]
\medskip

This completes the proof.
\end{proof}

\begin{corollary}\label{cor:met}
  If $G$ is a locally compact metrizable non-discrete group, then
 there exists a linear isometric copy of $L^\infty(G)$ in the quotient space ${\displaystyle \frac{L^\infty(G)}{\CB(G)}}.$
 In particular, $L^1(G)$ is extremely non-Arens regular.
\end{corollary}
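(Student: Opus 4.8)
The plan is to reduce the statement directly to Corollary \ref{prodenar} by taking the second factor trivial. The key observation is that a non-discrete metrizable group already contains, after passing to a suitable open subgroup, a piece of the product form $M\times H$ required there, with $H=\{e\}$; no sandwiching and hence none of the full machinery of Lemma \ref{lem:cond} is needed.

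First I would produce an open $\sigma$-compact subgroup of $G$. Choosing a compact symmetric neighborhood $V$ of the identity and setting $G_0=\bigcup_n V^n$, one obtains an open subgroup which, being compactly generated, is $\sigma$-compact. Since metrizability passes to subspaces, $G_0$ is metrizable; and since $G_0$ is open while $G$ is non-discrete, the identity is not isolated in $G_0$, so $G_0$ is non-discrete as well. Next I would set $M=G_0$ and $H=\{e\}$. Then $M$ is non-discrete, metrizable and $\sigma$-compact, $H$ is trivially a locally compact $\sigma$-compact group, and the canonical identification $G_0\cong G_0\times\{e\}=M\times H$ is a Haar homeomorphism: under it the product measure $\lambda_M\otimes\lambda_H$ is carried to $\lambda_{G_0}$, the Haar measure of the one-point group being the normalized point mass. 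Thus $G$ has an open $\sigma$-compact subgroup Haar homeomorphic to a group of exactly the form demanded by Corollary \ref{prodenar}.

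Finally I would invoke Corollary \ref{prodenar} to obtain a linear isometric copy of $L^\infty(G)$ inside ${\dsp\frac{L^\infty(G)}{\CB(G)}}$, whence $L^1(G)$ is extremely non-Arens regular. I expect no genuine obstacle here beyond a careful check of hypotheses: the entire content lies in recognizing that the factor $H$ may be taken trivial, so that the single metrizable non-discrete factor $M=G_0$ alone drives the construction of Theorem \ref{main:constrvv2} (where, for $H=\{e\}$, the domain $\ell^\infty(L^\infty(H))$ collapses to $\ell^\infty$). The only point deserving a word is the standard fact that a compactly generated locally compact group is $\sigma$-compact, which guarantees that $G_0$, and with it both factors $M$ and $H$, meet the $\sigma$-compactness requirements inherited through Corollary \ref{cor:mainvv}.
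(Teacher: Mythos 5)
Your proof is correct and is in substance the paper's own: both arguments rest on passing to the open, compactly generated (hence $\sigma$-compact), metrizable, non-discrete subgroup $G_0=\langle V\rangle$ and reading it as $M\times H$ with $M=G_0$ and $H=\{e\}$. The only difference is the citation: the paper plugs $M_1=M_2=K=\langle U\rangle$, $H_1=H_2=\{e\}$, $\varphi_1=\varphi_2=\mathrm{id}$ into Lemma \ref{lem:cond}, whose chain of seven isometries then degenerates (with $\mathbf\Psi_2$, $\mathbf\Psi_4$, $\mathbf\Psi_6$ reduced to identities) to exactly the chain $L^\infty(G)\to\ell^\infty(\alpha,L^\infty(K))\to\ell^\infty\to{\dsp\frac{\ell^\infty(\alpha,L^\infty(K))}{\ell^\infty(\alpha,\CB(K))}}\to{\dsp\frac{L^\infty(G)}{\CB(G)}}$ that your direct appeal to Corollary \ref{prodenar} produces; your route is the marginally leaner of the two but carries no new content. (Both versions share the same small wrinkle, inherited from Lemma \ref{Lintoell}(ii), when $G_0$ has finite index in $G$; in that case $G$ is itself $\sigma$-compact and one simply takes $G_0=G$.)
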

\begin{proof}
Choose $U$ a compact neighbourhood  and let $K=\langle U\rangle$, the (open) subgroup generated by $U$. Putting $M_1=M_2=K$, $H_1=H_2=\{e\}$, $\phi_1=\phi_2=id$,  we see that conditions (i), (ii) and (iii) of Lemma \ref{lem:cond} are fulfilled. The conclusion of the aforementioned Lemma yields the present corollary.
\end{proof}

\section{The final step}
We are now ready to prove Theorems A and B.
Corollary \ref{prodenar} implies in particular that every compact group of the form $M\times H$, where $M$ is a non-discrete, metrizable, $\sigma$-compact group, satisfies Theorems A and B and Corollary \ref{cor:met}  removes  the $\sigma$-compactness condition.

In view of Lemma \ref{lem:cond}, the main obstacle towards a complete proof is
in compact non-metrizable groups. But
Grekas and Mercourakis prove in \cite{GM} that every compact group $K$ may be sandwiched, from the measure and topological point of view,   between
two groups of the form $M\times H$ with $M$ non-discrete and metrizable.
With Lemma \ref{lem:cond}, we are then able to prove in Corollary \ref{cor:compenar} that Theorems A and B hold for every compact group.
Then we apply structural results by Davis \cite{davis55}  and Yamabe \cite{Ya} to see that any locally compact group contains an open subgroup of the form $\mathbb R^n\times K$, where $K$ is a compact group.
Together with Corollary \ref{prodenar} and Corollary \ref{cor:compenar}, this will lead to the full Theorems A and B.

We start by recalling Grekas and Mercourakis results, which will help us to deal with non-metrizable compact groups.

\begin{lemma}
\cite[Theorems 1.1(1) and 1.4(1)]{GM} \label{gm1}
Let $K$ be a infinite, compact, connected  group. There are two families of compact connected  metrizable groups $\{M_i\colon i\in I\}$ and $\{N_i\colon i\in I\}$ and two continuous and open surjective homomorphisms $\varphi_1$ and $\varphi_2$
\[ \prod_i M_i   \xrightarrow{ \varphi_2} K \xrightarrow{\varphi_1}  \prod_i N_i\] such that
$\varphi_2(\otimes_i \lambda_{M_i})=\lambda_K$ and $\varphi_1(\lambda_K)=\otimes_i \lambda_{N_i}$.
\end{lemma}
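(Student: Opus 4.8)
The plan is to deduce the lemma entirely from the structure theory of compact connected groups, after reducing away its two auxiliary conditions. The key soft observation is that both the openness of the maps and the two measure identities come for free: a continuous surjective homomorphism between compact groups is automatically open (a compact group is $\sigma$-compact, so the open mapping theorem applies), and it automatically carries normalized Haar measure to normalized Haar measure, since the image of a Haar probability measure under a surjective homomorphism is again a translation-invariant probability measure on the target and hence equals its Haar measure by uniqueness. So $\varphi_2(\otimes_i\lambda_{M_i})=\lambda_K$ and $\varphi_1(\lambda_K)=\otimes_i\lambda_{N_i}$ hold automatically, and the whole task reduces to producing continuous surjective homomorphisms $\prod_iM_i\longrightarrow K$ and $K\longrightarrow\prod_iN_i$ with each $M_i,N_i$ compact, connected and metrizable. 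I would then invoke the Hofmann--Morris decomposition $K=Z_0(K)\cdot K'$, where $Z_0(K)$ is the identity component of the center (a compact connected abelian group) and $K'=\overline{[K,K]}$ is a ``semisimple'' compact connected group covered by a product of simply connected simple compact Lie groups, and treat the two parts separately.

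For the covering map $\varphi_2$, on the abelian side I would write $Z_0(K)=\widehat{\Gamma}$ with $\Gamma$ discrete torsion-free abelian, embed $\Gamma$ into its divisible hull $\Gamma\otimes\mathbb{Q}\cong\bigoplus_{\kappa}\mathbb{Q}$, and dualize; since the dual of an inclusion of discrete groups is a surjection of their compact duals, this yields a continuous surjection $\prod_{\kappa}\widehat{\mathbb{Q}}\longrightarrow Z_0(K)$, and $\widehat{\mathbb{Q}}$ is the solenoid, a compact connected metrizable abelian group. On the semisimple side the canonical covering furnishes a continuous surjection $\prod_jS_j\longrightarrow K'$ with each $S_j$ a simply connected simple compact Lie group, hence metrizable. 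Composing the product of these two surjections with the multiplication map $Z_0(K)\times K'\longrightarrow K$, which is surjective because $K=Z_0(K)K'$, produces $\varphi_2$ out of a product of compact connected metrizable groups.

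For the quotient map $\varphi_1$, on the abelian side I would choose a free abelian subgroup $F=\bigoplus_i\Z$ of $\Gamma$ of full rank; restriction of characters gives a continuous homomorphism $Z_0(K)=\widehat{\Gamma}\longrightarrow\widehat{F}=\prod_i\T$, and this is surjective because every character of $F$ extends to $\Gamma$ by divisibility (injectivity) of $\T$. On the semisimple side one exhibits $K'$ as surjecting onto a product of metrizable compact connected groups assembled from its simple Lie quotients. Forming the product of the abelian and semisimple quotient maps and using $K=Z_0(K)K'$ once more to descend from $Z_0(K)\times K'$ to $K$ gives the desired surjection $\varphi_1\colon K\longrightarrow\prod_iN_i$.

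The delicate point, and the heart of the Grekas--Mercourakis argument, is not the existence of metrizable quotients of $K$, which is immediate from Peter--Weyl, but genuine \emph{surjectivity onto a product}. A naive diagonal map into a family of metrizable quotients lands in the fibre product, i.e.\ the projective limit, which is typically a proper closed subgroup; one must therefore choose the quotients to be suitably independent and coordinate the abelian and semisimple contributions while controlling the central profinite kernels, so that the combined map hits all of $\prod_iN_i$. Connectedness of $K$ is what rules out profinite obstructions and lets the metrizable pieces be taken connected. I expect this coordination, rather than any single step above, to be the main obstacle; it is exactly what the cited Theorems 1.1(1) and 1.4(1) of \cite{GM} supply, and for the purposes of this paper it suffices that both bounding groups are products of compact connected metrizable groups, which is precisely the input required to apply Lemma \ref{lem:cond}.
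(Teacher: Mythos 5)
The paper does not actually prove this lemma; it is quoted verbatim from Grekas--Mercourakis \cite[Theorems 1.1(1) and 1.4(1)]{GM}, so there is no internal argument to measure your proposal against. Your two preliminary reductions are correct and genuinely clarifying: a continuous surjective homomorphism between compact groups is automatically open (open mapping theorem for $\sigma$-compact locally compact groups), and it automatically pushes normalized Haar measure to normalized Haar measure, since the image measure is a translation-invariant regular Borel probability measure on the target and Haar measure is unique. So the measure identities and the openness in the statement are consequences of continuity plus surjectivity, not extra conditions to be arranged. Your construction of $\varphi_2$ is also essentially sound: dualizing $\Gamma\hookrightarrow\Gamma\otimes\mathbb{Q}$ gives a surjection of a product of solenoids onto $Z_0(K)$, the canonical covering handles $K'$, and multiplication $Z_0(K)\times K'\to K$ is a surjective \emph{homomorphism} precisely because $Z_0(K)$ is central.

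The genuine gap is in $\varphi_1$, and it is more than ``coordination.'' The map you assemble lives on $Z_0(K)\times K'$, whereas $K\cong (Z_0(K)\times K')/\Delta$ with $\Delta=\{(z,z^{-1})\colon z\in Z_0(K)\cap K'\}$ a possibly nontrivial profinite central subgroup; descent to $K$ requires the product map to annihilate $\Delta$. The semisimple factor does kill $Z(K')\supseteq Z_0(K)\cap K'$ if you pass to adjoint groups, but your abelian quotient $\widehat{\Gamma}\to\widehat{F}=\prod_i\T$, obtained by restricting characters to a full-rank free subgroup $F$, has no reason to vanish on $Z_0(K)\cap K'$, so the combined map need not factor through $K$ as written. (A repair exists --- run the abelian argument on $K/K'$ rather than on $Z_0(K)$, and use that the diagonal $K\to (K/A)\times(K/B)$ is surjective when $AB=K$, which holds for $A=K'$, $B=Z(K)$ --- but you did not carry it out.) You also do not address the requirement that both families be indexed by the \emph{same} set $I$, which Theorem \ref{compenar} actually exploits when it splits a single index $i$ off both products. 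Since you ultimately defer exactly these points to the cited theorems of \cite{GM}, your proposal is an annotated citation rather than an independent proof --- which is defensible here only because the paper itself does precisely the same.
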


\begin{lemma}
\cite[Theorem 1.1 (2)]{GM} \label{gm2}
Let $K$ be a  totally disconnected, compact group. There is a homeomorphism $\phi\colon K\to \prod_i F_i$, where each $F_i$ a  finite group, such that $\varphi(\lambda_K)=(\otimes_i \lambda_{F_i}).$
\end{lemma}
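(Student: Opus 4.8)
The plan is to exploit the profinite structure of a compact totally disconnected group. First I would invoke van Dantzig's theorem: such a $K$ has a neighbourhood base at the identity consisting of compact open normal subgroups, whose intersection is $\{e\}$. Enumerating such a base as $\{N_\xi:\xi<\kappa\}$ with $\kappa=w(K)$, I would build by transfinite recursion a strictly decreasing well-ordered chain of open normal subgroups
\[ K=K_0\supsetneq K_1\supsetneq\cdots\supsetneq K_\beta\supsetneq\cdots\qquad(\beta<\gamma),\]
setting $K_{\beta+1}=K_\beta\cap N_{\xi(\beta)}$ at successor steps and $K_\lambda=\bigcap_{\beta<\lambda}K_\beta$ at limits, with the bookkeeping arranged so that every $N_\xi$ is eventually used and $\bigcap_{\beta<\gamma}K_\beta=\{e\}$. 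Since each $N_\xi$ is open, hence of finite index in $K$, every quotient $F_\beta:=K_\beta/K_{\beta+1}$ is a finite group; these will be the factors, indexed by the set $I$ of successor gaps.

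Next I would manufacture the homeomorphism. For each $\beta$ choose a clopen transversal $s_\beta\colon F_\beta\to K_\beta$ for the (finite, clopen) cosets of $K_{\beta+1}$ in $K_\beta$, with $s_\beta$ sending the identity to $e$. Each $x\in K$ then has a canonical coordinate expansion: put $x_0=x$ and, given $x_\beta\in K_\beta$, record the coordinate $c_\beta\in F_\beta$ with $s_\beta(c_\beta)K_{\beta+1}=x_\beta K_{\beta+1}$ and set $x_{\beta+1}=s_\beta(c_\beta)^{-1}x_\beta\in K_{\beta+1}$; at a limit ordinal $\lambda$, $x_\lambda$ is to be the value of the \emph{transfinite product} of the chosen representatives. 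The map $\phi(x)=(c_\beta)_{\beta\in I}$ is the candidate homeomorphism onto $\prod_i F_i$. The crux here, and the main obstacle, is making sense of these transfinite products at limit stages: I would show that the partial products $p_\beta$ form a Cauchy net for the group uniformity (since $p_\beta^{-1}p_{\beta'}\in K_{\beta+1}$ whenever $\beta<\beta'$ and the $K_\beta$ shrink to $\{e\}$), so that completeness of the compact group $K$ forces convergence, with the limit lying in $K_\lambda$. Granting this, $\phi$ is a continuous bijection (injectivity from $\bigcap_\beta K_\beta=\{e\}$, surjectivity by reconstructing $x$ as the transfinite product of $s_\beta(c_\beta)$ from a prescribed family $(c_\beta)$), hence a homeomorphism between compact Hausdorff spaces.

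Finally I would identify the pushforward measure. The key observation is that for an open normal subgroup $N$ of finite index in a compact group $G$, translation invariance of Haar measure makes all cosets of $N$ equimeasurable, so Weil's formula (see Theorem~2.56 of \cite{Fo}) yields a measure-preserving homeomorphism $G\cong (G/N)\times N$, with $G/N$ finite and uniformly weighted. Iterating this transfinitely, with the limit stages handled exactly as in the previous paragraph, I would prove by induction on $\delta\le\gamma$ that $\phi$ restricts to a measure-preserving homeomorphism $K\cong\bigl(\prod_{\beta<\delta}F_\beta\bigr)\times K_\delta$. Taking $\delta=\gamma$ (so that $K_\gamma=\{e\}$) shows that $\phi$ carries $\lambda_K$ to a Borel probability measure on $\prod_i F_i$ agreeing with $\otimes_i\lambda_{F_i}$ on every cylinder set depending on finitely many coordinates; since such cylinders generate the product $\sigma$-algebra, the uniqueness clause for the infinite product measure (as constructed in Section~28 of \cite{Ha}) gives $\phi(\lambda_K)=\otimes_i\lambda_{F_i}$, as required. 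The genuinely delicate points throughout are the convergence of the transfinite products and the compatibility of the successive measure decompositions across limit ordinals; once these are secured, both the topological and the measure-theoretic conclusions follow by routine transfinite induction.
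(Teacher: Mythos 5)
First, a point of reference: the paper does not prove this lemma at all --- it is imported verbatim as Theorem~1.1(2) of Grekas--Mercourakis \cite{GM}, so your argument is being measured against that external proof rather than against anything in the text. Your overall strategy (van Dantzig's theorem, a well-ordered chain of open normal subgroups with finite successive quotients, coset-expansion coordinates, and identification of the pushforward measure on cylinder sets) has the right general shape; the successor steps and the final measure-theoretic paragraph are essentially sound.

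The genuine gap is at limit ordinals, precisely where you flag ``the main obstacle''. You claim the partial products $p_\beta$, $\beta<\lambda$, form a Cauchy net ``since $p_\beta^{-1}p_{\beta'}\in K_{\beta+1}$ \dots and the $K_\beta$ shrink to $\{e\}$''. But for a limit $\lambda<\gamma$ the subgroups $K_\beta$ with $\beta<\lambda$ shrink only to $K_\lambda=\bigcap_{\beta<\lambda}K_\beta$, which is a nontrivial closed --- and no longer open --- normal subgroup; the net $(p_\beta)_{\beta<\lambda}$ is Cauchy only modulo $K_\lambda$, so it need not converge in $K$, and completeness of $K$ gives nothing. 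Its cluster set is contained in the coset $xK_\lambda$ and is in general not a single point, so $x_\lambda$ is not well defined by your recipe; with it, the coordinates $c_\beta$ for $\beta\ge\lambda$ and the injectivity/surjectivity arguments collapse. What the limit stage actually requires is a continuous transversal $K/K_\lambda\to K$ for the closed non-open subgroup $K_\lambda$ --- equivalently a continuous section of the quotient map --- whose existence for profinite groups is a genuine theorem, not a limit of finite products; alternatively, one argues entirely on quotients, showing by transfinite induction that the coordinate maps induce compatible homeomorphisms $K/K_\beta\cong\prod_{\beta'<\beta}F_{\beta'}$ and using at limits that $K/K_\lambda=\varprojlim_{\beta<\lambda}K/K_\beta$ is the inverse limit of the finite partial products, hence the full product. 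Either repair also has to feed into your measure induction, since the decomposition $K\cong(K/K_\lambda)\times K_\lambda$ you invoke at limit stages presupposes exactly such a section together with Weil's formula for the closed subgroup $K_\lambda$. As written, your proof is complete only in the metrizable case $\gamma=\omega$, where no intermediate limit ordinals occur.
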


\begin{theorem}
  \label{compenar}
  Let $G$ be an infinite, non-discrete, locally compact group having a compact open subgroup $K$.
  Then there exists a linear isometric copy of $L^\infty(G)$ in the quotient space ${\dsp\frac{L^\infty(G)}{\CB(G)}}.$
    \end{theorem}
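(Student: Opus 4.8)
The plan is to reduce the statement to a verification of the hypotheses of Lemma \ref{lem:cond} for the compact group $K$ itself. Since $G$ is non-discrete and $K$ is open, $K$ is an infinite compact group, and $K$ is trivially Haar homeomorphic to itself; so once the four groups $H_1,H_2,M_1,M_2$ of Lemma \ref{lem:cond} have been produced for $K$, that lemma delivers the desired linear isometric copy of $L^\infty(G)$ in ${\dsp\frac{L^\infty(G)}{\CB(G)}}$, and hence the extreme non-Arens regularity of $L^1(G)$. The argument splits according to the weight of $K$. If $K$ is metrizable, then being infinite and compact it is non-discrete, so $K=K\times\{e\}$ exhibits $K$ as Haar homeomorphic to $M\times H$ with $M=K$ non-discrete metrizable and $H=\{e\}$; Corollary \ref{prodenar} then applies directly (equivalently, take $M_1=M_2=K$, $H_1=H_2=\{e\}$, $\varphi_1=\varphi_2=\mathrm{id}$ in Lemma \ref{lem:cond}). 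The substantial case is that of a non-metrizable $K$, where $\kappa:=w(K)>\omega$.

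For non-metrizable $K$ I would invoke the Grekas--Mercourakis machinery. Assembling Lemmas \ref{gm1} and \ref{gm2} into the general sandwich asserted in the outline, one obtains two families $\{A_i\}_{i\in I}$ and $\{B_i\}_{i\in I}$ of compact metrizable groups, indexed by a set $I$ with $|I|=\kappa$ and with both products of weight $\kappa=w(K)$, together with continuous, open, surjective homomorphisms
\[
\prod_{i\in I}A_i \xrightarrow{\ \varphi_2\ } K \xrightarrow{\ \varphi_1\ }\prod_{i\in I}B_i,\qquad \varphi_2\Bigl(\textstyle\bigotimes_i\lambda_{A_i}\Bigr)=\lambda_K,\quad \varphi_1(\lambda_K)=\textstyle\bigotimes_i\lambda_{B_i}.
\]
Each product then carries $\kappa$-many non-trivial factors, so I may choose a countably infinite $I_0\subset I$ for which $M_2=\prod_{i\in I_0}A_i$ and $M_1=\prod_{i\in I_0}B_i$ are both infinite, and set $H_2=\prod_{i\in I\setminus I_0}A_i$, $H_1=\prod_{i\in I\setminus I_0}B_i$. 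Then $M_1,M_2$ are infinite compact metrizable, hence non-discrete, metrizable and $\sigma$-compact, giving condition (i); writing $\prod_iA_i=M_2\times H_2$ and $\prod_iB_i=M_1\times H_1$ turns $\varphi_1,\varphi_2$ into the homomorphisms required by (iii), whose kernels, being closed in the compact group $K$, are compact, and all four groups are compact, hence $\sigma$-compact.

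It remains to secure condition (ii), a surjective linear isometry $\Psi\colon L^\infty(H_2)\to L^\infty(H_1)$; this is where Theorem \ref{Psi4} enters. Both $H_1$ and $H_2$ are products of $\kappa$-many compact metrizable groups. After regrouping the finite factors into countably infinite blocks (an infinite product of non-trivial finite groups being an uncountable, compact, metrizable group with atomless Haar measure) and absorbing any finitely many leftover finite factors into an infinite one, I may present each of $H_1,H_2$ as a product of exactly $\kappa$-many factors, every one of which is an uncountable, complete, separable metric space carrying an atomless, finite Borel measure; here the uncountability of $\kappa$ guarantees that this bookkeeping preserves the number $\kappa$ of factors on both sides. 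Theorem \ref{Psi4} then yields isometries $L^1(H_2)\cong L^1(\I^{\kappa})\cong L^1(H_1)$; taking adjoints produces a surjective linear isometry between the duals, which is exactly the map $\Psi$ of condition (ii). With (i)--(iii) in hand, Lemma \ref{lem:cond} completes the proof.

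The step I expect to be the main obstacle is the assembly of the general sandwich for a compact group that is neither connected nor totally disconnected. Lemma \ref{gm1} supplies genuine continuous open homomorphisms for the connected component $K_0$, whereas for the profinite quotient $K/K_0$ Lemma \ref{gm2} provides only a measure-preserving homeomorphism onto a product of finite groups. Reconciling these into the single homomorphic sandwich that Lemma \ref{lem:cond} demands, while simultaneously arranging the regrouping so that the atomless factors of $H_1$ and $H_2$ match in cardinality (so that Theorem \ref{Psi4} yields a true isometry rather than merely comparable spaces), is the delicate point on which the argument turns.
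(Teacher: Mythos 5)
Your reduction to Lemma \ref{lem:cond} and your treatment of the metrizable case are fine, but the non-metrizable case rests entirely on a step that you flag as ``the main obstacle'' and never actually supply: the existence of a single homomorphic sandwich $\prod_i A_i \xrightarrow{\varphi_2} K \xrightarrow{\varphi_1} \prod_i B_i$ with compact metrizable factors for an \emph{arbitrary} non-metrizable compact $K$. Lemma \ref{gm1} provides such homomorphisms only when $K$ is \emph{connected}, and Lemma \ref{gm2} provides for the totally disconnected case only a measure-preserving \emph{homeomorphism}, not a homomorphism; neither the paper nor \cite{GM} assembles these into a homomorphic sandwich for general $K$. This is not just a citation issue: the natural attempt to upgrade Lemma \ref{gm2} to homomorphisms fails, since a continuous homomorphism from a product of finite groups into a torsion-free profinite group such as $\Z_p$ is necessarily trivial (each finite factor has trivial image and the restricted direct product is dense), so a profinite group with a $\Z_p$-quotient is not a continuous homomorphic image of any product of finite groups. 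Since your entire non-metrizable case funnels through this unproved sandwich, the argument as written has a genuine gap. (Your Theorem \ref{Psi4} bookkeeping for condition (ii) is also more delicate than you indicate --- one must ensure every factor is non-trivial and atomless before regrouping --- but that is secondary.)

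The paper's proof is organized precisely to avoid ever sandwiching $K$ itself. It first replaces $K$ by the Haar-homeomorphic $K_0\times(K/K_0)$, where $K_0$ is the connected component, and then splits on the metrizability of $K/K_0$. If $K/K_0$ is metrizable, then $K_0$ is non-metrizable and \emph{connected}, so Lemma \ref{gm1} applies to $K_0$ alone; the sandwich for $K_0\times(K/K_0)$ is obtained by crossing with $\mathrm{id}_{K/K_0}$, peeling off one index $i$ and setting $M_1=N_i\times(K/K_0)$, $M_2=M_i\times(K/K_0)$, $H_1=\prod_{j\neq i}N_j$, $H_2=\prod_{j\neq i}M_j$; condition (ii) then follows from Theorem \ref{Psi4} because $H_1$ and $H_2$ are products of compact connected metrizable groups over the \emph{same} index set. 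If $K/K_0$ is non-metrizable, no sandwich is needed at all: Lemma \ref{gm2} exhibits $K$ as Haar homeomorphic to $M\times H$ with $M$ a countably infinite product of finite groups (compact, metrizable, non-discrete), and Corollary \ref{prodenar} finishes directly. To repair your argument you should replace the hypothetical ``general sandwich'' with this two-case analysis.
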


\begin{proof}

If $G$ is metrizable, Corollary \ref{cor:met}  yields the Theorem.

Suppose now that $G$ is not metrizable. Then $K$ is not metrizable either.
As a compact group, $K$ is Haar homeomorphic to $K_0\times (K/K_0)$, where $K_0$ denotes the connected component of $K$ (see \cite[Theorem (B) and the remarks thereafter]{GM}, see also \cite[Theorem 8]{M}).
Then,  either $K_0$ or $K/K_0$ is not metrizable.

\emph{We suppose first that $K/K_0$ is metrizable.} Then $K_0$ is non-metrizable and so it must be non-discrete.
By Lemma \ref{gm1} we have two infinite collections  of compact connected  metrizable groups \[\{M_i\colon i\in I\}\quad\text{ and}\quad\{N_i\colon i\in I\}\] and two continuous and open surjective homomorphisms $\phi_1$ and $\phi_2$
\[  \xymatrix{\prod_i M_i  \ar[r]^{\displaystyle \;\;\;\phi_2}& K_0\ar[r]^{\displaystyle\phi_1\;\;\;} & \prod_i N_i}\] such that
$\phi_2(\otimes_i \lambda_{M_i})=\lambda_{K_0}$ and $\phi_1(\lambda_{K_0})=\otimes_i \lambda_{N_i}$.

We check  that $G$ satisfies the conditions of  Lemma \ref{lem:cond}.
We start by fixing any $i\in I$ and taking $M_i$ and $N_i$. Then these groups are metrizable and non-discrete,
and so are $M_i\times (K/K_0)$ and $N_i\times (K/K_0)$. Then let $J=I\setminus\{i\}$ and put
\begin{align*} &M_1=N_i\times (K/K_0)\quad\text{and}\quad M_2=M_i\times (K/K_0),\\&
H_1=\prod_{i\in J} N_i\quad\text{and}\quad
H_2=\prod_{i\in J} M_i,\\
  &M_2\times H_2=\prod_i M_i  \times(K/K_0)\xrightarrow{\;\;\;\varphi_2\;\;\;} K\approx K_0\times  K/K_0
  \xrightarrow{\;\;\;\varphi_1\;\;\;}  \prod_i N_i\times (K/K_0)=M_1\times H_1,\end{align*}
where $\varphi_2=\phi_2 \times \mathrm{id}_{K/K_0}$  and  $\varphi_1=\phi_1 \times \mathrm{id}_{K/K_0}.$
Condition (i) of Lemma \ref{lem:cond} is trivially  fulfilled for $M_1$ and $M_2.$
Since $H_1$ and $H_2$ are products of compact connected metric spaces indexed by the same set $J$,  the spaces $L^\infty(H_1)$ and $L^\infty(H_2)$ are
linearly  isometric by Theorem \ref{Psi4} so that  condition (ii) holds as well.
The maps $\varphi_1$ and $\varphi_2$  clearly satisfy condition (iii) of Lemma \ref{lem:cond}.
We can therefore apply the conclusion of Lemma \ref{lem:cond} to deduce that there is a linear isometric copy of $L^\infty(G)$ in the quotient space ${\dsp\frac{L^\infty(G)}{\CB(G)}}$.

\emph{Assume now that $K/K_0$ is not metrizable.}
By Lemma \ref{gm2} we have now an uncountable collection of finite groups $\{F_i\colon i\in I\}$ and a Haar  homeomorphism $\varphi\colon K/K_0\longrightarrow \prod_{i\in I}F_i$.
Fix an infinite countable subset $J$ of $I$ and let \[M=\prod_{i\in J}F_i\quad\text{and}\quad H=K_0\times\prod_{i\in I\setminus J}F_i.\]
 Then $M$ is a compact, non-discrete and metrizable subgroup of $K$ and K is Haar homeomorphic to $M\times H$.
Corollary \ref{prodenar} yields the desired claims.
 \end{proof}

\begin{remark}
 When $\kappa(G)\ge w(G),$ the extreme non-Arens regularity of $G$ may also  be obtained  by  measuring the size of the quotient $\frac{\luc(G)}{\wap(G)}$ as already done in  \cite{BF}, or the size of the quotient $\frac{\CB(G)}{\luc(G)}$ if $G$ is not discrete  as done in \cite{FG1}.
In \cite{FG1}, the proof is based on the construction by induction of a family $\mathcal F$ of subsets $\{T_\eta:\eta<\kappa(G)\}$
of $G$ which are uniformly disjoint such that each member fails to be an $\luc$-interpolation set but the union is
an approximable $\CB(G)$-interpolation set. The same method was used in \cite{BF}, where each member of the family $\mathcal F$ fails to be a $\wap(G)$-interpolation set but their union is
an approximaable $\luc(G)$-interpolation set.
In each case, this gives a linear isometric copy
of $\ell^\infty(\kappa(G))$ in the quotient space. Since $\kappa(G)\ge w(G)$, this gives a linear isometric copy
of $L^\infty(G)$ in the quotient space.
Since the quotient ${\dsp\frac{L^\infty(G)}{\CB(G)}}$ is trivial when $G$ is discrete, we shall use in our final section \cite{BF} or \cite{FV}
for the extreme non-Arens regularity of the group algebra.

But as already noted, these methods of proof (finding a copy of $\ell_\infty(\gamma)$ in the quotient ${\dsp\frac{L^\infty(G)}{\CB(G)}}$ and embedding  $L^\infty(G)$  in another copy of $\ell_\infty(\gamma)$) are likely to fail in large compact groups, as  for $\gamma>\omega$, $\ell_\infty(\gamma)$ does not embed in $L_\infty(G)$ regardless of the size of $G$ (see \cite[Proposition 4.7 and Theorem 4.8]{rose70}).
\end{remark}

\begin{corollary}\label{cor:compenar}
  If $G$ is an infinite  compact group, then there exists a linear isometric copy of $L^\infty(G)$ in the quotient space ${\dsp\frac{L^\infty(G)}{\CB(G)}},$ and so $L^1(G)$ is extremely non-Arens regular.
\end{corollary}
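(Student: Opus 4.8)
The plan is to reduce the general infinite compact group $G$ to the case of a compact group with a compact open subgroup, so that Theorem \ref{compenar} applies directly. The key observation is that a compact group is automatically a locally compact group having a compact open subgroup, namely $G$ itself serves as its own compact open subgroup. First I would note that if $G$ is finite it is excluded by hypothesis (infinite), and if $G$ is discrete then, being compact, it would be finite; hence every infinite compact group is necessarily non-discrete. Thus $G$ is an infinite, non-discrete, compact group.

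Next I would simply observe that $G$ is trivially an open (indeed clopen) subgroup of itself, and it is compact. Therefore $G$ qualifies as a locally compact group having a compact open subgroup $K=G$. This places us squarely in the situation of Theorem \ref{compenar}: we have an infinite, non-discrete, locally compact group $G$ possessing a compact open subgroup. I would then invoke Theorem \ref{compenar} to conclude that there exists a linear isometric copy of $L^\infty(G)$ in the quotient space ${\dsp\frac{L^\infty(G)}{\CB(G)}}$.

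Finally, I would recall the standing observation made in the introductory discussion (following the chain $\wap(G)\subseteq \luc(G)\subseteq \CB(G)$) that any linear isometric copy of $L^\infty(G)$ sitting inside ${\dsp\frac{L^\infty(G)}{\CB(G)}}$ is, a fortiori, a copy inside ${\dsp\frac{L^\infty(G)}{\wap(G)}}$, and since $\wap(L^1(G))=\wap(G)$, this is exactly what is required to witness that $L^1(G)$ is extremely non-Arens regular in the sense of Granirer. This yields the second assertion of the corollary.

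There is essentially no obstacle here, since the corollary is a trivial specialization of Theorem \ref{compenar}: the only point worth verifying is the implicit claim that an infinite compact group is non-discrete, which is immediate because a discrete compact space is finite. All the genuine work---the sandwiching of $K$ between products of metrizable groups via Lemmas \ref{gm1} and \ref{gm2}, the assembly of the seven isometries through Lemma \ref{lem:cond}, and the treatment of the connected and totally disconnected components via the decomposition $K\approx K_0\times(K/K_0)$---has already been carried out in the proof of Theorem \ref{compenar}. The corollary merely records the compact case as the special instance $K=G$.
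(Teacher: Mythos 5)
Your proposal is correct and matches the paper exactly: the paper states this corollary immediately after Theorem \ref{compenar} with no written proof, treating it precisely as the specialization $K=G$, with the (tacit) observation that an infinite compact group is non-discrete. Your additional remarks on why non-discreteness holds and on passing from $\CB(G)$ to $\wap(G)$ just make explicit what the paper leaves implicit.
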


The study of $L^1(G)$ for any infinite locally compact group $G$ can now be reduced to that of $L^1(\mathbb R^n\times K)$ with $K$ compact, by means of the following Lemma due to Davis \cite{davis55} that builds on a  basic structural result of Yamabe \cite{Ya}.

\begin{lemma}[Davis, \cite{davis55}]\label{lem:davis}
Every  locally compact group $G$ contains an open subgroup $H$ which is Haar homeomorphic to $\R^n \times K$ where  $K$
is
a  compact group.
  \end{lemma}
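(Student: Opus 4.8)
The plan is to reduce, by means of the Gleason--Yamabe structure theory, to the case of a Lie group, for which the splitting into a Euclidean factor and a maximal compact subgroup is classical, and then to transport that splitting back through a compact kernel while keeping track of Haar measure. First I would pass to an almost connected open subgroup. Let $G_0$ be the connected component of the identity; it is a closed normal subgroup and $G/G_0$ is totally disconnected and locally compact, so by van Dantzig's theorem it has a compact open subgroup $W$. The preimage $H=q^{-1}(W)$ under the quotient map $q\colon G\to G/G_0$ is then an open subgroup of $G$ whose identity component is $G_0$ and with $H/G_0\cong W$ compact; thus $H$ is almost connected. Since an open subgroup of $H$ is an open subgroup of $G$ and a left Haar measure of $H$ is (up to normalization) the restriction of one on $G$, Haar homeomorphisms transfer and it suffices to produce the desired decomposition for $H$ itself.

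Next, Yamabe's theorem \cite{Ya}, applied to the almost connected group $H$, furnishes a compact normal subgroup $N\trianglelefteq H$ such that $L:=H/N$ is a Lie group, again almost connected. For an almost connected Lie group the Cartan--Iwasawa--Malcev theorem on maximal compact subgroups provides a maximal compact subgroup $C\leq L$ with $L/C$ homeomorphic to $\R^n$. Writing $\pi\colon H\to L$ for the quotient map and setting $K:=\pi^{-1}(C)$, the group $K$ is an extension of the compact group $C$ by the compact group $N$, hence compact, and it contains $N$. Since $\pi$ is open and surjective it induces a homeomorphism of homogeneous spaces $H/K\cong L/C\cong\R^n$. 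The coset projection $p\colon H\to H/K\cong\R^n$ is a locally trivial fibration with compact fibre $K$ (local cross-sections for $H\to H/K$ exist because $K$ is compact), and over the contractible base $\R^n$ such a bundle is trivial; so $p$ admits a global continuous section $\sigma\colon\R^n\to H$, and $(x,k)\mapsto\sigma(x)k$ is a homeomorphism $\R^n\times K\to H$.

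Finally, this homeomorphism must be upgraded to a Haar homeomorphism, and here lies the main obstacle: one must choose $\sigma$ and coordinate three measures at once so that the map is genuinely measure-preserving, not merely topological. I would invoke the Weil integration formula for $p\colon H\to H/K$, disintegrating $\lambda_H$ along the fibres with the normalized Haar measure $\lambda_K$ on each fibre and an invariant measure on the base $H/K\cong\R^n$; because $K$ is compact its modular function is trivial, so the relevant measure on $H/K$ is a multiple of Lebesgue measure and the fibrewise integration recombines to the product $\lambda_{\R^n}\otimes\lambda_K$ on $\R^n\times K$. The delicate point is precisely this modular-function bookkeeping together with the compatibility of $\sigma$ with the bundle structure and with all three Haar measures; the deep classical inputs (Yamabe's solution of the pro-Lie problem and the maximal compact subgroup theorem) are taken as given, and the real work is gluing them across the compact kernel $N$ so that the resulting homeomorphism respects measure.
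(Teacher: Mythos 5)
Your route is in substance the same as the paper's: the paper proves this lemma by citing Yamabe for the compact normal subgroup $N\trianglelefteq H$ with Lie quotient and then citing Lemma~2 of Davis for ``the required decomposition''; your reduction to an almost connected open subgroup, the passage to the Lie quotient $L=H/N$, the choice of a maximal compact subgroup $C$ and the pullback $K=\pi^{-1}(C)$ are exactly the skeleton of Davis's argument. So there is no divergence of method to report.

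There is, however, a genuine gap at the one step that carries all the content, and you say so yourself: you never establish that the homeomorphism $\R^n\times K\to H$ is a \emph{Haar} homeomorphism. Two concrete problems. First, a section $\sigma$ obtained merely from triviality of the bundle $H\to H/K$ over a contractible base gives a homeomorphism $(x,k)\mapsto\sigma(x)k$, but an arbitrary continuous section has no reason to carry $\lambda_{\R^n}\otimes\lambda_K$ to $\lambda_H$; the section must be the one coming from the Cartan--Malcev--Iwasawa coordinates (a product of one-parameter subgroups), for which an explicit integration formula is available. Second, even granting Weil's formula (which does apply: $K$ compact forces $\Delta_H|_K\equiv 1$, so an $H$-invariant Radon measure on $H/K$ exists and is unique up to scale), the identification $H/K\cong L/C\cong\R^n$ does not automatically transport that invariant measure to Lebesgue measure, because the transported $H$-action on $\R^n$ is not by translations; identifying it with Lebesgue measure, and checking that the disintegration recombines to the product measure under the chosen section, is precisely the computation that constitutes Davis's Lemma~2. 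As written, your proof defers exactly the assertion being proved. To close it you must either carry out that computation (Iwasawa-type integration formula for $L=E\cdot C$ with $E$ a topological $\R^n$, then lift through the compact kernel $N$ using the invariance of $\lambda_N$) or, as the paper does, cite Davis's Lemma~2 for it explicitly.
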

\begin{proof}
By  a theorem of Yamabe    \cite{Ya}, $G$ contains an open subgroup $H$ which,  on its turn, has   a  compact normal subgroup $N$  such that $H/N$ is a connected Lie group. Lemma 2 of \cite{davis55}  then proves that $H$ admits the required decomposition.
\end{proof}

We reach now our aim. We start with Theorem B.

\begin{theorem} \label{Eureka}
  Let $G$ be an infinite, non-discrete, locally compact group.
  Then there exists a linear isometric copy of $L^\infty(G)$ in the quotient space ${\dsp\frac{L^\infty(G)}{\CB(G)}}.$
   \end{theorem}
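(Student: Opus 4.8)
The plan is to reduce the general non-discrete locally compact group $G$ to the two special cases already handled, namely products of the form $M\times H$ with $M$ non-discrete metrizable $\sigma$-compact (Corollary \ref{prodenar}) and infinite compact groups (Corollary \ref{cor:compenar}). The structural input is Lemma \ref{lem:davis}: every locally compact group $G$ contains an open subgroup $H_0$ Haar homeomorphic to $\R^n\times K$ with $K$ compact. Since the conclusion we seek only concerns an open subgroup (via the coset-decomposition machinery embodied in $\mathbf\Psi_1$ and $\mathbf\Psi_7$ from Corollary \ref{prodenar}), it suffices to establish the claim for $H_0=\R^n\times K$ itself, and indeed the cleanest route is to verify that $H_0$ satisfies the hypotheses of Lemma \ref{lem:cond} (or of Corollary \ref{prodenar}) so that the passage to $G$ via the open subgroup is automatic.

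First I would split according to whether $H_0=\R^n\times K$ has $n\geq 1$ or $n=0$. If $n\geq 1$, then $H_0$ is already of the form $M\times H$ with $M=\R$ (or $\R^n$), which is non-discrete, metrizable and $\sigma$-compact, and $H=\R^{n-1}\times K$; Corollary \ref{prodenar} applies directly to give a linear isometric copy of $L^\infty(G)$ in $\frac{L^\infty(G)}{\CB(G)}$. If $n=0$, then $H_0=K$ is compact. Here one must further distinguish the finite and infinite cases for $K$: if $G$ is non-discrete then its open subgroup $H_0$ cannot be finite (a finite open subgroup would force $G$ discrete), so $K$ is an infinite compact group, and Corollary \ref{cor:compenar} supplies the required isometric copy of $L^\infty(K)$ in $\frac{L^\infty(K)}{\CB(K)}$, which then transfers to $G$ through its open subgroup exactly as in the proof of Corollary \ref{prodenar}.

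The one point that requires a little care, and which I expect to be the main (though modest) obstacle, is the transfer from the open subgroup $H_0$ back to $G$ when $H_0$ is a proper open subgroup. This is not a new difficulty: it is precisely the construction of $\mathbf\Psi_1$ and $\mathbf\Psi_7$ carried out in Corollary \ref{prodenar}, using that the right cosets $\{x_\eta H_0:\eta<\alpha\}$ with $\alpha=|G:H_0|$ form a disjoint cover of $G$ by clopen sets, so that $T\colon\ell^\infty(\alpha,L^\infty(H_0))\to L^\infty(G)$ is a surjective linear isometry restricting to an isometry onto $\CB(G)$ on $\ell^\infty(\alpha,\CB(H_0))$. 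Corollaries \ref{prodenar} and \ref{cor:compenar} are in fact already phrased so as to absorb this step, since the former takes an open $\sigma$-compact subgroup and the latter is stated for $G$ itself; thus what remains is bookkeeping rather than genuine new content.

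Putting the two branches together, every infinite non-discrete locally compact group $G$ is covered: the $n\geq 1$ branch by Corollary \ref{prodenar} and the $n=0$ branch by Corollary \ref{cor:compenar}, in each case after replacing $G$ by its Davis–Yamabe open subgroup $H_0\cong\R^n\times K$. This yields the desired linear isometric copy of $L^\infty(G)$ in $\frac{L^\infty(G)}{\CB(G)}$ and completes the proof of Theorem B; Theorem A then follows for non-discrete $G$ via the inclusions $\wap(G)\subseteq\CB(G)$ noted after Theorem B, and for discrete $G$ from \cite{BF} or \cite{FV}.
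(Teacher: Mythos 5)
Your proposal is correct and follows essentially the same route as the paper: invoke Lemma \ref{lem:davis} to obtain an open subgroup $\R^n\times K$, apply Corollary \ref{prodenar} when $n\geq 1$, and fall back on the compact case when $n=0$. The only cosmetic difference is that the paper cites Theorem \ref{compenar} directly in the $n=0$ branch, which is already stated for a group \emph{having} a compact open subgroup, so the coset-transfer step you flag as needing care is already packaged there rather than redone via $\mathbf\Psi_1$ and $\mathbf\Psi_7$.
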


\begin{proof}
  By the  previous lemma, $G$ contains an open subgroup $H$  of the form $H=\R^n\times K$.

If $n\neq 0$,  Corollary \ref{prodenar} gives the required isometry.
If, on the contrary, $n=0$, then $H=K$ is a non-discrete, compact subgroup of $G$  and Theorem \ref{compenar} yields the same conclusion.
\end{proof}

And here is Theorem A.

\begin{theorem} \label{Eureka!}
  Let $G$ be an infinite locally compact group.
  Then $L^1(G)$ is extremely non-Arens regular.
 \end{theorem}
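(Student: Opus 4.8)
The plan is to split according to whether $G$ is discrete, reducing the non-discrete case to Theorem \ref{Eureka} and quoting the literature for the discrete case. First I would record the ingredients already in hand. Since $A=L^1(G)$ has $A^*=\linfty$ and, by \cite{U}, $\wap(A)=\wap(G)$, the space figuring in Granirer's definition is exactly $\frac{A^*}{\wap(A)}=\frac{\linfty}{\wap(G)}$; and from the preliminaries we have the inclusion chain $\wap(G)\subseteq\luc(G)\subseteq\CB(G)$. So the whole task is to exhibit, inside $\frac{\linfty}{\wap(G)}$, a closed linear subspace that admits a continuous linear surjection onto $A^*=\linfty$.

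Suppose first that $G$ is not discrete. Then Theorem \ref{Eureka} furnishes a linear isometry $\Theta\colon\linfty\to\frac{\linfty}{\CB(G)}$, that is, a linear isometric copy $\Theta(\linfty)$ of $\linfty$ inside $\frac{\linfty}{\CB(G)}$. Write $q_1\colon\linfty\to\frac{\linfty}{\wap(G)}$ and $q_2\colon\linfty\to\frac{\linfty}{\CB(G)}$ for the quotient maps. Because $\wap(G)\subseteq\CB(G)$, the identity of $\linfty$ descends to a contractive linear surjection $P\colon\frac{\linfty}{\wap(G)}\to\frac{\linfty}{\CB(G)}$ with $P\circ q_1=q_2$. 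I would then put $W=P^{-1}\bigl(\Theta(\linfty)\bigr)$, a closed linear subspace of $\frac{\linfty}{\wap(G)}$, and observe that $\Theta^{-1}\circ(P|_W)\colon W\to\linfty=A^*$ is a continuous linear surjection, being the composite of the contraction $P|_W$ onto $\Theta(\linfty)$ with the isometric isomorphism $\Theta^{-1}$. Hence $W$ is a closed subspace of $\frac{A^*}{\wap(A)}$ having $A^*$ as a continuous linear image, which is precisely extreme non-Arens regularity. (This is the ``same copy'' observation recorded in the preliminaries: if one tracks $\Theta$ as $\Theta=q_2\circ S$ with $S$ a linear isometry into $\linfty$, then $q_1\circ S$ is itself an isometry into $\frac{\linfty}{\wap(G)}$, since $\|q_1(Sv)\|\ge\|q_2(Sv)\|=\|v\|\ge\|q_1(Sv)\|$ for every $v\in\linfty$.)

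It remains to treat the discrete case, where $\frac{\linfty}{\CB(G)}$ is trivial and Theorem \ref{Eureka} has no content. Here I would simply invoke \cite{BF} (or \cite{FV}), which show that $\ell^1(G)=L^1(G)$ is extremely non-Arens regular for every infinite discrete group $G$. Combining the two cases yields the statement for every infinite locally compact group.

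I do not expect any genuine obstacle at this stage: all of the analytic difficulty has been absorbed into Theorem \ref{Eureka}, and the discrete case is quoted. The single point requiring care is the passage from the quotient by $\CB(G)$ to the quotient by $\wap(G)$, and this is immediate from $\wap(G)\subseteq\CB(G)$ via the contraction $P$ as above.
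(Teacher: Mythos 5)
Your proposal is correct and follows exactly the paper's own route: the non-discrete case is reduced to Theorem \ref{Eureka} via the inclusion $\wap(G)\subseteq\CB(G)$ (the ``same copy'' observation the paper records in the preliminaries and uses without reference), and the discrete case is quoted from \cite{BF} and \cite{FV}. Your explicit construction of the closed subspace $W=P^{-1}(\Theta(L^\infty(G)))$ merely spells out the detail the paper leaves implicit, so there is nothing further to compare.
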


 \begin{proof} If $G$ is non-discrete, this is an immediate consequence of Theorem \ref{Eureka}.
 If $G$ is discrete, the claim was proved in \cite{FV} or \cite{BF}.
 \end{proof}

Let $A(G)$ be the Fourier algebra, consisting of all functions $u\in C_0(G)$ of the form $f\ast\check g$ $(x\in G)$, where $f,g\in L^2(G)$ and $\check g(x)=\overline{g(x^{-1})}.$
The norm in $A(G)$ is given by \[\|u\|=\inf\{\|f\|_2\|g\|_2:\;u=f*\check g, f, g\in L^2(G) \}\]
(see Eymard \cite{Eymard64}).
In harmonic analysis,  $A(G)$ is seen as the dual
object of the group algebra $L^1(G)$. In fact when $G$ is abelian, the Fourier transform is an isometric
algebra isomorphism from $L^1(\widehat G)$ onto $A(G).$

As already noted at the begining of the paper, the extreme non-Arens regularity
of the Fourier algebra $A(G)$ was proved by Hu in \cite{H} when $w(G)\ge \kappa(G).$
The question of whether $A(G)$ is extremely non-Arens regular for any infinite locally compact group remains open.
But when $G$ is Abelian, we can omit the condition $w(G)\ge \kappa(G).$

\begin{corollary} If $G$ is an infinite locally compact Abelian group, then the Fourier algebra $A(G)$
 is extremely non-Arens regular
 \end{corollary}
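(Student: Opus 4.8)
The plan is to transport Theorem~\ref{Eureka!} to the Fourier algebra by means of Pontryagin duality. Since $G$ is an infinite locally compact Abelian group, its dual group $\widehat G$ is again locally compact and Abelian, and it is infinite: were $\widehat G$ finite, then $G\cong\widehat{\widehat G}$ would be finite as well, contrary to our hypothesis. Applying Theorem~\ref{Eureka!} to $\widehat G$, I conclude that the group algebra $L^1(\widehat G)$ is extremely non-Arens regular.

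Next I would invoke the Fourier transform which, as recalled just before the statement, is an isometric algebra isomorphism of $L^1(\widehat G)$ onto $A(G)$. The crux is that extreme non-Arens regularity depends only on the isometric-isomorphism class of a Banach algebra. Indeed, if $\Phi\colon A\to B$ is an isometric algebra isomorphism, then its adjoint $\Phi^*\colon B^*\to A^*$ is an isometric linear bijection, and $\Phi^*$ carries $\wap(B)$ onto $\wap(A)$: weak almost periodicity of a functional is formulated solely through the relative weak compactness of the orbit $\{b'.b : b\in B,\ \|b\|\le 1\}$, and both the product and the weak topology are preserved by the isometric isomorphism $\Phi$. Hence $\Phi^*$ descends to an isometric isomorphism between the quotients $\dfrac{B^*}{\wap(B)}$ and $\dfrac{A^*}{\wap(A)}$.

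Taking $A=L^1(\widehat G)$ and $B=A(G)$, a closed linear subspace of $\dfrac{A(G)^*}{\wap(A(G))}$ admitting $A(G)^*$ as a continuous linear image is then obtained as the $\Phi^*$-image of the subspace furnished by the extreme non-Arens regularity of $L^1(\widehat G)$, the required continuous surjection onto $A(G)^*$ being produced by composing with the isometric identifications. This yields the extreme non-Arens regularity of $A(G)$. I expect the only point demanding verification to be that $\Phi^*$ maps $\wap(B)$ onto $\wap(A)$; this is routine once the definition of $\wap$ is written out, so no genuine obstacle arises, and the corollary is an essentially immediate consequence of Theorem~\ref{Eureka!} together with duality.
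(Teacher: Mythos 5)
Your argument is exactly the one the paper intends: the corollary is stated right after recalling that for Abelian $G$ the Fourier transform is an isometric algebra isomorphism of $L^1(\widehat G)$ onto $A(G)$, so applying Theorem~\ref{Eureka!} to the (necessarily infinite) dual group $\widehat G$ and transporting the property along this isomorphism is precisely the intended proof. Your verification that extreme non-Arens regularity is an isometric-isomorphism invariant (via $\Phi^*$ carrying $\wap(B)$ onto $\wap(A)$) is correct and fills in the detail the paper leaves implicit.
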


\subsection*{Acknowledgments}
Parts of the article was written when the first named author was visiting Jaume University in Castellon in December 2011
and May 2012. He would like to express his warm thanks for the kind hospitality and support.

\samepage

\end{document}